\newtheorem{theorem}{Theorem}[section]
\newtheorem{lemma}[theorem]{Lemma}
\newtheorem{remark}[theorem]{Remark}
\begin{document}

\date{\today}

%
%
%
%
%
%
%
%
%
\title[2-Local automorphisms on $AW^\ast$-algebras]
 {2-Local automorphisms on $AW^\ast$-algebras}
\author[Shavkat Ayupov]{Shavkat Ayupov}

\address{%
 V.I.Romanovskiy Institute of Mathematics\\
  Uzbekistan Academy of Sciences, 81 \\ Mirzo Ulughbek street, 100170  \\
  Tashkent,   Uzbekistan}

 \address{National University of Uzbekistan, 4 \\ University str.,Tashkent, Uzbekistan}

\email{sh$_{-}$ayupov@mail.ru}

\author{Karimbergen Kudaybergenov}
\address{Ch. Abdirov 1 \\
Department of Mathematics \\
Karakalpak State University \\
Nukus 230113, Uzbekistan}

\email{karim2006@mail.ru}

\author{Turabay  Kalandarov}
\address{Ch. Abdirov 1 \\
Department of Mathematics \\
Karakalpak State University \\
Nukus 230113, Uzbekistan}

\email{turaboy$_-$kts@mail.ru}

\subjclass{Primary 46L57;  Secondary  47B47; 47C15}

\keywords{$AW^\ast$-algebra; matrix algebra;  automorphism;
$2$-local automorphism}

\date{November 3, 2018}
\dedicatory{With the deep respect, we dedicate the article to the 65-th anniversary of Professor  Ben de Pagter.}

\begin{abstract}
The paper is devoted to  2-local automorphisms  on $AW^\ast$-algebras. Using the technique of matrix algebras over a unital Banach algebra we prove that any 2-local automorphism on an arbitrary $AW^\ast$-algebra without finite type~I direct summands is a global automorphism.
\end{abstract}

\maketitle

\section{Introduction and the Main Theorem}

\medskip

In 1990, Kadison \cite{Kad}  and Larson and Sourour \cite{Lar} independently
introduced the concept of a local derivation. A linear
map \(\Delta : \mathcal{A} \to \mathcal{M}\) is called a
\textit{local derivation} if  for every \(x\in\mathcal{A}\) there
exists a derivationû \(D_x\)  (depending on \(x\)) such that
\(\Delta(x) = D_x(x).\) It is natural to consider under
which conditions local derivations automatically become
derivations. Many partial results have been done in this problem.
In \cite{Kad} Kadison shows that every norm-continuous local
derivation from a von Neumann algebra \(M\) into a dual
\(M\)-bimodule is a derivation.
In~\cite{Joh} Johnson extends
Kadison's result and proves every local derivation from a
$C^{\ast}$-algebra $\mathcal{A}$ into any Banach
$\mathcal{A}$-bimodule is a derivation.

In 1997, \v{S}emrl \cite{Semrl97}  initiated the study of so-called  2-local derivations
and 2-local automorphisms on algebras. Namely, he  described such maps  on the algebra \(B(H)\) of all
bounded linear operators on an infinite dimensional separable Hilbert space  \(H\).

In the above notations,  a map \(\Delta : \mathcal{A} \to
\mathcal{A}\) (not necessarily linear) is called a \textit{2-local
automorphism} if, for every \(x,y \in  \mathcal{A},\) there exists an
automorphism \(\Phi_{x,y} : \mathcal{A} \to \mathcal{A}\) such that
\(\Phi_{x,y} (x) = \Delta(x)\) and \(\Phi_{x,y}(y) = \Delta(y).\)

 Afterwards  local derivations and 2-local derivations have
been investigated by many authors on different algebras and many
results have been obtained in \cite{AK2016JP, AK2016, AKP,
  Kad, KimKim04,  Semrl97}.

  In \cite{BFGP} it was established that every  2-local $\ast$-homomorphism
from a von Neumann algebra into a $C^\ast$-algebra is a linear
$\ast$-homomorphism. These authors also proved that every 2-local Jordan $\ast$-homomorphism from
 a JBW*-algebra into a JB*-algebra is  a
Jordan *-homomorphism.

In the present paper we extend the  result obtained in
\cite{AK2016JP}  for 2-local derivations on  $AW^\ast$-algebras  to the case of 2-local automorphisms on $AW^\ast$-algebras  .

If  $\Delta :\mathcal{A}\rightarrow \mathcal{A}$ is  a 2-local
automorphism, then from the definition it easily follows that
$\Delta$ is homogenous. At the same time,
\begin{equation*}\label{joor}
 \Delta(x^2)=\Phi_{x,x^2}(x^2)=\Phi_{x,x^2}(x)\Phi_{x,x^2}(x)=\Delta(x)^2
\end{equation*}
for each $x\in \mathcal{A}.$ This means that additive (and hence, linear) 2-local
automorphism  is a Jordan automorphism.

The following  Theorem is the main result of this paper.

\begin{theorem}\label{kaplanal} Let $M$ be an arbitrary
$AW^\ast$-algebra without finite type I direct summands. Then any 2-local automorphism
 $\Delta$ on  \(M\)
is an automorphism.
\end{theorem}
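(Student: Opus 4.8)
The plan is to show that a 2-local automorphism is linear, and then invoke the observation already made in the excerpt that a linear 2-local automorphism is a Jordan automorphism; the final passage from Jordan automorphism to automorphism will use the absence of finite type I summands. Let me think about the structure of such a proof carefully.

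The main obstacle in 2-local problems is always additivity: showing $\Delta(x+y) = \Delta(x) + \Delta(y)$. The 2-local condition only gives us control on pairs $(x,y)$ through a single automorphism $\Phi_{x,y}$, so $\Delta(x) = \Phi_{x,y}(x)$ and $\Delta(y) = \Phi_{x,y}(y)$, but $\Phi_{x,y}$ varies with the pair. The standard technique (Šemrl's) is to use matrix units / systems of matrix units to pin down the action of $\Delta$ on a generating set and then bootstrap to linearity.

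Given the abstract mentions "technique of matrix algebras over a unital Banach algebra," the key structural fact about $AW^*$-algebras without finite type I direct summands is that they contain a large abelian subalgebra and admit a nice matrix decomposition. An $AW^*$-algebra without finite type I part is either properly infinite (and hence, by a theorem, contains a copy of $M_{2^\infty}$ or at least $2\times 2$ matrices over a corner) or finite of continuous type (type II$_1$), and in both cases it has "enough" matrix units — specifically, such an algebra is a matrix algebra $M_n(\mathcal{A})$ over a unital Banach algebra, or contains halving projections allowing a $2\times 2$ matrix decomposition.

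So here's the plan. First I would establish the basic properties of $\Delta$: homogeneity is immediate from the definition; the identity $\Delta(x^2) = \Delta(x)^2$ is already recorded; and one also gets $\Delta$ preserves the product in the Jordan sense whenever additivity is available. I would also note that $\Delta$ preserves invertibility, the spectrum, and commutation relations with matrix units, because each $\Delta(x) = \Phi_{x,y}(x)$ is the image under a genuine automorphism (which is spectrum-preserving and multiplicative). Next, the crucial reduction: because $M$ has no finite type I direct summands, I would use its structure theory to realize $M$ (or decompose it) so that it contains a system of matrix units $\{e_{ij}\}$ with $M \cong M_n(e_{11}Me_{11})$ for some $n \geq 2$, identifying $M$ with matrices over the corner algebra $\mathcal{A} = e_{11}Me_{11}$, a unital Banach algebra. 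Then I would show $\Delta$ acts "nicely" on these matrix units and diagonal elements.

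The heart of the argument — and the step I expect to be the main obstacle — is proving additivity of $\Delta$ on the whole algebra. The standard route is: (1) show that after composing $\Delta$ with a suitable fixed automorphism one may assume $\Delta$ fixes the matrix units $e_{ij}$; (2) use the 2-local condition applied to pairs $(x, e_{ij})$ to show that $\Delta$ respects the matrix entries, i.e. the $(i,j)$-entry of $\Delta(x)$ depends only on the $(i,j)$-entry of $x$, giving a "local" map on each matrix slot; (3) prove additivity on each corner $e_{ii}Me_{ii}$ and hence on the diagonal; and finally (4) patch these together, using the matrix structure and the multiplicativity on pairs, to obtain full additivity. For the off-diagonal and diagonal interaction one typically exploits elements like $e_{ij} + e_{ji}$ together with the relation $\Delta(x^2) = \Delta(x)^2$ to force linearity of the entrywise maps. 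Once additivity is secured, $\Delta$ is linear, hence a Jordan automorphism by the computation in the introduction. The last step is to upgrade from Jordan automorphism to automorphism: a Jordan automorphism of a von Neumann or $AW^*$-algebra decomposes as a direct sum of an automorphism and an anti-automorphism over a central projection; the anti-automorphic part would force a type I finite summand (or contradict the matrix relations $\Delta(e_{ij}) = e_{ij}$ established above, since an anti-automorphism reverses $e_{ij}e_{jk} = e_{ik}$), so the hypothesis of no finite type I summand, combined with the normalization fixing the matrix units, kills the anti-automorphic part and yields that $\Delta$ is a genuine automorphism.
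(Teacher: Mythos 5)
Your overall architecture matches the paper's: reduce $M$ via halving projections to a matrix algebra $M_4(\mathcal{A})$ over the corner $\mathcal{A}=e_{1,1}Me_{1,1}$, normalize $\Delta$ so that it fixes the matrix units, show the entrywise/corner behavior, prove additivity on the corners $e_{i,i}Me_{i,i}$, and use the Jordan decomposition (property \textbf{(J)}) together with a noncommutativity witness to kill the anti-homomorphic part. Your matrix-unit argument against the anti-part (an anti-homomorphism would reverse $e_{i,j}e_{j,k}=e_{i,k}$) is a legitimate variant of the paper's use of property \textbf{(M)}, which exploits $x,y$ with $xy=0$, $yx\neq 0$ inside the corner.

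However, there is a genuine gap at exactly the step you yourself flag as the heart of the matter: your step (4), ``patch these together \dots to obtain full additivity.'' This is asserted, not proved, and the suggested tools (elements $e_{ij}+e_{ji}$ and the identity $\Delta(x^2)=\Delta(x)^2$) are not shown to force additivity across matrix entries; the pair-dependence of $\Phi_{x,y}$ is precisely what blocks any naive patching. Notably, the paper \emph{never proves global additivity as an intermediate step}. Instead its strategy is inverted relative to yours: the corner additivity plus \textbf{(J)}/\textbf{(M)} is used \emph{mid-proof} to show each $\Delta_{i,i}$ is a genuine (multiplicative) automorphism of $\mathcal{A}$ --- which is needed so that $\overline{\Delta_{1,1}}$, defined entrywise as in \eqref{cender}, is an automorphism of the matrix algebra --- and after composing with $\overline{\Delta_{1,1}}^{-1}$ one may assume $\Delta$ fixes both the matrix units and all diagonal matrices. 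From there the paper proves directly, entry by entry, that $\Delta\equiv\operatorname{id}$, via a chain of delicate lemmas (controlling diagonal entries through invertible perturbations $e+x_{k,k}$; pinning off-diagonal entries using diagonal test elements with distinct scalars $\lambda_i$ exceeding $\|x_{s,k}\|$; and a $2\times 2$ lemma requiring $\lambda e-\Delta(y)_{1,2}$ invertible), together with an induction identifying $M_{2^n}(\mathcal{A})\cong M_2\bigl(M_{2^{n-1}}(\mathcal{A})\bigr)$ --- which is why the reduction targets $M_4$ rather than an arbitrary $M_n$. Your proposal contains no substitute for this second stage, and since ``linear $\Rightarrow$ Jordan $\Rightarrow$ automorphism'' is the easy part, the missing additivity argument is not a detail but the main content of the theorem.
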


The proof of this Theorem is based on  representations of $AW^\ast$-algebras as matrix algebras over
a unital Banach algebra  with the following two properties:

\textbf{(J)}: \textit{for any Jordan automorphism  $\Phi$ on
\(\mathcal{A}\) there exists a decomposition  $\mathcal{A}=\mathcal{A}_1\oplus\mathcal{A}_2$
such that}
$$
x\in \mathcal{A}\mapsto p_1(\Phi(x))\in \mathcal{A}_1
$$
\textit{is a homomorphism and}
$$
x\in \mathcal{A}\mapsto p_2(\Phi(x))\in \mathcal{A}_2
$$
\textit{is an anti-homomorphism, where $p_i$ is a projection from $\mathcal{A}$ onto
$\mathcal{A}_i,$ $i=1,2$}

\textbf{(M)}: \textit{There exist elements $x, y \in \mathcal{A}$ such that $xy=0$ and $yx\neq 0.$}

\begin{remark}\label{remark}
 Note that if an algebra $\mathcal{A}$ contains  a subalgebra isomorphic to the matrix algebra $M_2(\mathbb{C}),$ then
it satisfies the condition \textbf{(M)}. Indeed, for  matrices $x=\left(
                                                          \begin{array}{cc}
                                                            0 & 1 \\
                                                            0 & 0 \\
                                                          \end{array}
                                                        \right)$ and
$y=\left(
                                                          \begin{array}{cc}
                                                            1 & 0 \\
                                                            0 & 0 \\
                                                          \end{array}
                                                        \right),$
we have $xy=0$ and $yx\neq 0.$
\end{remark}

\section{The proof of the main result}

The key  tool for  the proof of Theorem~\ref{kaplanal} is  the following.

\begin{theorem}\label{mainlocal}
Let $\mathcal{A}$ be  a unital  Banach algebra with the properties  \textbf{(J)} and \textbf{(M)} and
let $M_{2^n}(\mathcal{A})$ be the algebra of all $2^n \times
2^n$-matrices over $\mathcal{A},$ where \(n\geq 2.\) Then any
2-local automorphism  $\Delta$ on  $M_{2^n}(\mathcal{A})$  is an automorphism.
\end{theorem}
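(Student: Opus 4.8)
The plan is to prove additivity of $\Delta$ first and then to promote the resulting linear map to an automorphism by means of the hypotheses \textbf{(J)} and \textbf{(M)}. Two facts are immediate from $2$-locality and require no additivity: since every automorphism fixes the unit, $\Delta(1)=\Phi_{1,1}(1)=1$, and since $\Delta(z)=\Phi_{z,z}(z)$ with $\Phi_{z,z}$ injective, the map $\Delta$ is injective. Recall also, from the computation in the introduction, that $\Delta$ is homogeneous and satisfies $\Delta(z^2)=\Delta(z)^2$; hence once $\Delta$ is shown to be additive it is automatically a linear Jordan automorphism of $M_{2^n}(\mathcal A)$. It is convenient to view $M_{2^n}(\mathcal A)\cong M_2(\mathcal B)$ with coefficient algebra $\mathcal B=M_{2^{n-1}}(\mathcal A)$ and matrix units $\{E_{ij}\}$. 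The hypothesis $n\geq 2$ guarantees that $\mathcal B$ is itself a nontrivial matrix algebra, so by Remark~\ref{remark} it contains a copy of $M_2(\mathbb C)$ and satisfies \textbf{(M)}, while property \textbf{(J)} for $\mathcal B$ follows from \textbf{(J)} for $\mathcal A$ by a routine lifting to matrices; this is precisely where $n\geq 2$ is used.

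The main obstacle is additivity, and I would establish it by exploiting the rigidity forced by a generic diagonal element. Fix $d=\sum_i\lambda_i E_{ii}$ with pairwise distinct scalars $\lambda_i$. Using the $2$-local property on the $E_{ii}$ and on $d$ one first shows that $\Delta$ coincides, on the subalgebra generated by $d$ and the $E_{ii}$, with a single automorphism $\Theta$; replacing $\Delta$ by $\Theta^{-1}\circ\Delta$, which is again a $2$-local automorphism, one may assume that $\Delta$ fixes $d$ and each $E_{ii}$. The gain is rigidity: for every $x$ the automorphism $\Phi_{d,x}$ now fixes $d$, hence belongs to the stabiliser of $d$, which preserves each spectral projection $E_{ii}$; substituting into $\Delta(x)=\Phi_{d,x}(x)$ determines the diagonal blocks $E_{ii}\,\Delta(x)\,E_{ii}$ entrywise. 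A parallel analysis, using generic perturbations of the symmetries $E_{ii}+E_{ij}+E_{ji}+E_{jj}$, controls the off-diagonal blocks $E_{ii}\,\Delta(x)\,E_{jj}$. Writing $x=\sum_{i,j}E_{ii}xE_{jj}$ and comparing the block data of $x$, of $y$ and of $x+y$ then yields $\Delta(x+y)=\Delta(x)+\Delta(y)$.

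Once additivity is in hand, $\Delta$ is a linear Jordan automorphism of $M_2(\mathcal B)$, and the classical structure theory of Jordan maps of matrix algebras of order at least two, combined with property \textbf{(J)} for $\mathcal B$, provides a central decomposition $M_2(\mathcal B)=N_1\oplus N_2$ on which $\Delta$ restricts to a homomorphism on $N_1$ and to an anti-homomorphism on $N_2$. To finish I must show $N_2=\{0\}$, and this is where property \textbf{(M)} is decisive. Suppose $N_2\neq\{0\}$; since $N_2$ is itself a matrix summand it carries matrix units, and transporting the \textbf{(M)}-witnesses $u,v\in\mathcal B$ (with $uv=0$, $vu\neq 0$) into these units produces $X,Y\in N_2$ with $XY=0$ and $YX\neq 0$. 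On one hand, $2$-locality gives an automorphism $\Phi_{X,Y}$ with $\Delta(X)\Delta(Y)=\Phi_{X,Y}(X)\Phi_{X,Y}(Y)=\Phi_{X,Y}(XY)=0$; on the other hand, anti-multiplicativity of $\Delta$ on $N_2$ gives $\Delta(X)\Delta(Y)=\Delta(YX)$, which is nonzero because $YX\neq 0$ and $\Delta$ is injective. This contradiction forces $N_2=\{0\}$, so $\Delta$ is multiplicative, hence an automorphism, proving Theorem~\ref{mainlocal}.

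In summary, the only genuinely delicate point is the additivity step: since $2$-locality constrains $\Delta$ only on pairs, extracting a global additive law requires the normalisation by $\Theta$ together with the rigidity coming from the stabiliser of a generic diagonal, followed by careful block-by-block bookkeeping with the matrix units. The passage from Jordan automorphism to genuine automorphism, by contrast, is short and is driven entirely by \textbf{(J)} and \textbf{(M)}.
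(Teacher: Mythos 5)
Your endgame is fine: once additivity is granted, the argument that \textbf{(J)} plus \textbf{(M)} plus injectivity of $\Delta$ kills the anti-homomorphic summand is essentially the paper's Lemma~\ref{lemmaseven} (with the caveat that the paper applies \textbf{(J)} only to the corner $\mathcal{A}_{i,i}\cong\mathcal{A}$, where it holds by hypothesis, whereas you need it for the full matrix algebra $M_2(\mathcal{B})$ and dismiss this as ``routine lifting'' --- that lifting is really the Jacobson--Rickart theorem on Jordan homomorphisms of matrix rings, not a formality; also your witnesses should be taken in the domain summand $\Delta^{-1}(N_2)$, not in $N_2$, and your injectivity argument should invoke $\Phi_{x,y}$ for the pair $x,y$ with $\Delta(x)=\Delta(y)$, not $\Phi_{z,z}$). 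The genuine gap is the additivity step, which you yourself identify as the only delicate point and then do not carry out. Your rigidity observation is correct as far as it goes: since the $\lambda_i$ are distinct, each $E_{ii}$ is a Lagrange interpolation polynomial in $d$, so any automorphism fixing $d$ fixes every $E_{ii}$ and preserves every block $E_{ii}\,M_2(\mathcal{B})\,E_{jj}$. But all you can then extract from $\Delta(x)=\Phi_{d,x}(x)$ is $E_{ii}\Delta(x)E_{jj}=\Phi_{d,x}(E_{ii}xE_{jj})$, where $\Phi_{d,x}$ \emph{still depends on} $x$. Comparing the block data of $x$, $y$ and $x+y$ therefore means comparing images under three unrelated automorphisms $\Phi_{d,x}$, $\Phi_{d,y}$, $\Phi_{d,x+y}$, and nothing in your proposal ties them together; no amount of block-by-block bookkeeping can produce $\Delta(x+y)=\Delta(x)+\Delta(y)$ from this alone.

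What the paper does at exactly this point is different in a way that matters. To compare the values of $\Delta$ at $x$, $y$ and $x+y$ with a \emph{single} automorphism, it encodes all three into one diagonal element $z$ with $z_{i,i}=x+y$, $z_{k,k}=x$, $z_{s,s}=y$ in three distinct diagonal slots, and tests the pair $(z,v)$ against the nilpotent shift $v$ from \eqref{vv}: by the Benkart--Osborn decomposition \eqref{decompos}, an automorphism fixing $v$ has inner part $a$ of the Toeplitz triangular form \eqref{trian}, with one and the same entry $a_1$ down the diagonal, so all three slots are conjugated by the same $a_1$ and transformed by the same $\psi$; this is what makes the comparison work (Lemmata~\ref{lemmafour}--\ref{six}). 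Note this device needs at least three diagonal slots, so it cannot be run inside $M_2(\mathcal{B})$, which is the frame you chose; and even so it yields additivity only of the corner maps $\Delta_{i,i}$ on $\mathcal{A}_{i,i}\cong\mathcal{A}$, never of $\Delta$ globally. Indeed the paper never proves global additivity at all: after \textbf{(J)} and \textbf{(M)} make $\Delta_{1,1}$ an automorphism of $\mathcal{A}$, it normalizes $\Delta$ to be the identity on the diagonal subalgebra and on the matrix units (Lemma~\ref{adj}), and then proves the normalized map is identically $\id$ by separate $2\times2$ computations (Lemmata~\ref{ss}--\ref{twotwo}) and induction through $M_{2^n}(\mathcal{A})\cong M_2\bigl(M_{2^{n-1}}(\mathcal{A})\bigr)$ --- which is where the hypothesis $2^n$, $n\geq2$, is actually used, rather than where you place it. As it stands, your proposal is a plan whose central step is asserted, not proved.
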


The proof  of Theorem~\ref{mainlocal} consists of two steps. In
the  first step  we shall show additivity of  $\Delta$ on the
subalgebra of diagonal matrices from $M_{n}(\mathcal{A}).$

Let $\{e_{i,j}\}_{i,j=1}^n$ be  the system of matrix units in
$M_n(\mathcal{A}).$ For $x \in M_n(\mathcal{A})$ by $x_{i,j}$ we
denote the $(i, j)$-entry of $x,$ where $1 \leq i, j \leq n.$ We
shall, if necessary, identify this element with the matrix from
$M_n(\mathcal{A})$ whose $(i,j)$-entry is $x_{i,j},$ other entries
are zero, i.e. $x_{i,j}=e_{i,i}xe_{j,j}.$

Each element \(x\in  M_n(\mathcal{A})\)  has the form
\[
x =\sum\limits_{i , j = 1}^n  x_{ij} e_{ij},\,\,  x_{ij}\in
\mathcal{A}, i , j \in  \overline{1, n}. \]

Let \(\psi: \mathcal{A} \to \mathcal{A}\) be an automorphism.
Setting
\begin{equation}\label{cender}
\overline{\psi}(x) =\sum\limits_{i , j = 1}^n  \psi(x_{ij})
e_{ij},\,\, x_{ij}\in \mathcal{A}, i , j \in  \overline{1, n}
\end{equation}
we obtain a well-defined linear operator \(\overline{\psi}\)
on  \(M_n(\mathcal{A}).\) Moreover
\(\overline{\psi}\) is an automorphism.

For an invertible element $a\in M_n(\mathcal{A})$ set
$$
\Phi_a(x)=axa^{-1},\, x\in M_n(\mathcal{A}).
$$
Then $\Phi_a$ is an automorphism and it is called a spatial automorphism.

It is known \cite[Corollary 3.14]{BO} that every automorphism  \(\Phi\)
on  \(M_n(\mathcal{A})\)  can be
represented as a product
\begin{equation}\label{decompos}
\Phi = \Phi_a\circ \overline{\psi},
\end{equation}
where \(\Phi_a\) is a spatial automorphism implemented by an
invertible element \(a \in  M_n(\mathcal{A}),\) while \(\overline{\psi}\)
is the automorphism  of the form \eqref{cender}  generated by an automorphism
 \(\psi\) on  \(\mathcal{A}.\)

Consider the following two matrices:
\begin{equation}\label{vv}
 u=\sum\limits_{i=1}^n \frac{1}{2^i}e_{i,i},\,
v=\sum\limits_{i=2}^n e_{i-1,i}.
\end{equation}

It is easy to see that an element $x \in M_n(\mathcal{A})$
commutes with $u$  if and only if it is diagonal, and if an
element  $a \in M_n(\mathcal{A})$ commutes with $v,$ then $a$ is
of the form
\begin{equation}\label{trian}
a=\left( \begin{array}{ccccccc}
a_1   & a_2   & a_3 & . & \ldots & a_n \\
0     & a_1   & a_2 & . & \ldots & a_{n-1}\\
0     & 0     & a_1 & . & \ldots & a_{n-2}\\
\vdots& \vdots& \vdots & \vdots  &\vdots & \vdots\\
0 & 0 &\ldots & . & a_1 & a_2\\
0 & 0 &\ldots & .& 0 & a_1
\end{array} \right).
\end{equation}

Further in Lemmata~\ref{lemmatwo}--\ref{lemmafive} we assume that
$n\geq 2.$

\begin{lemma}\label{lemmatwo}
For every $2$-local automorphism $\Delta$ on  $M_n(\mathcal{A})$
 there exists an automorphism   $\Phi$   such that
$\Delta|_{\mbox{sp}\{e_{i,j}\}_{i,j=1}^n}=\Phi|_{\mbox{sp}\{e_{i,j}\}_{i,j=1}^n},$
where $\mbox{sp}\{e_{i,j}\}_{i,j=1}^n$ is the linear span of the
set $\{e_{i,j}\}_{i,j=1}^n.$
\end{lemma}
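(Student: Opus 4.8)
The plan is to produce the required automorphism by applying $2$-locality to the single pair $(u,v)$ from \eqref{vv} and then showing that, after normalizing by it, $\Delta$ coincides with the identity on the span. Concretely, set $\Phi:=\Phi_{u,v}$, so that $\Phi(u)=\Delta(u)$ and $\Phi(v)=\Delta(v)$, and replace $\Delta$ by $\Delta_0:=\Phi^{-1}\circ\Delta$. Since the composition of an automorphism with a $2$-local automorphism is again a $2$-local automorphism, $\Delta_0$ is a $2$-local automorphism with $\Delta_0(u)=u$ and $\Delta_0(v)=v$; it then suffices to prove that $\Delta_0(x)=x$ for every $x=\sum_{i,j}\lambda_{i,j}e_{i,j}$ with scalar coefficients $\lambda_{i,j}\in\mathbb{C}$, for this gives $\Delta|_{\mathrm{sp}\{e_{i,j}\}}=\Phi|_{\mathrm{sp}\{e_{i,j}\}}$. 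Throughout I will use that every such scalar $x$ is fixed by each operator $\overline{\psi}$ of the form \eqref{cender}, because $\psi$ is unital and linear; hence in the decomposition \eqref{decompos} only the spatial factor acts nontrivially on $x$.

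The first step is to record two rigidity facts coming from \eqref{decompos}. If an automorphism $\Phi_a\circ\overline{\psi}$ fixes $u$, then $\overline{\psi}(u)=u$ forces $aua^{-1}=u$, so $a$ commutes with $u$; as $u$ has the distinct scalar diagonal entries $2^{-i}$, the element $a$ must be diagonal. Likewise, fixing $v$ forces $a$ to commute with $v$, hence to have the upper-triangular Toeplitz form \eqref{trian}. Applying $2$-locality to the pair $(u,e_{i,i})$ yields an automorphism fixing $u$ that carries $e_{i,i}$ to $\Delta_0(e_{i,i})$; since a diagonal spatial automorphism fixes each $e_{i,i}$, we get $\Delta_0(e_{i,i})=e_{i,i}$. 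For an off-diagonal unit $e_{i,j}$ I would compare two pairs: from $(u,e_{i,j})$ the image $\Delta_0(e_{i,j})=ae_{i,j}a^{-1}$ with $a$ diagonal is supported only in the $(i,j)$-entry, while from $(v,e_{i,j})$ the image $be_{i,j}b^{-1}$ with $b$ Toeplitz has $(i,j)$-entry equal to $1$; comparing $(i,j)$-entries gives $\Delta_0(e_{i,j})=e_{i,j}$. Thus $\Delta_0$ fixes every matrix unit.

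The decisive step is to pass from matrix units to an arbitrary $x=\sum_{i,j}\lambda_{i,j}e_{i,j}$, and here the freedom in $a$ coming from $u$ and $v$ alone is insufficient, so I would instead use the matrix units themselves as test elements. For a fixed pair $(i,j)$ apply $2$-locality to $(e_{j,i},x)$: the resulting automorphism fixes $e_{j,i}$, so by \eqref{decompos} it equals $\Phi_c\circ\overline{\psi}$ with $c$ in the commutant of $e_{j,i}$, and since $\overline{\psi}(x)=x$ we get $\Delta_0(x)=cxc^{-1}$. A direct computation of this commutant shows that the $i$-th row and $j$-th column of $c$ are diagonal with $c_{i,i}=c_{j,j}$ invertible; since the commutant is closed under inversion, $c^{-1}$ has the same structure, and therefore the $(i,j)$-entry of $cxc^{-1}$ equals $c_{i,i}\lambda_{i,j}c_{i,i}^{-1}=\lambda_{i,j}$. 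The diagonal entries are obtained identically from the pairs $(e_{j,j},x)$. As the position $(i,j)$ was arbitrary, every entry of $\Delta_0(x)$ agrees with that of $x$, so $\Delta_0(x)=x$, as required.

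The main obstacle is precisely the transition in the last step: conjugation by a diagonal matrix (from $u$) and by a Toeplitz matrix (from $v$) can each alter an off-diagonal entry of a general element, so the two obvious test elements do not by themselves pin down $\Delta_0$ on the whole span. The argument must therefore exploit that $\Delta_0$ fixes \emph{all} matrix units and run the commutant computation for $e_{j,i}$ position by position. I expect the only genuinely delicate point to be the bookkeeping in that commutant calculation, together with the verification that the commutant of $e_{j,i}$ is closed under inversion so that $c^{-1}$ inherits the required row/column structure; properties \textbf{(J)} and \textbf{(M)} are not needed at this stage and enter only in the subsequent lemmata.
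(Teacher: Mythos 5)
Your proposal is correct, and its skeleton coincides with the paper's proof: normalize by $\Phi_{u,v}$ so that $\Delta(u)=u$ and $\Delta(v)=v$, then for each matrix unit apply the decomposition \eqref{decompos} twice --- once against $u$, forcing the conjugating element to be diagonal so that $\Delta(e_{i,j})=a_ia_j^{-1}e_{i,j}$, and once against $v$, forcing the Toeplitz form \eqref{trian}, whose conjugation leaves the $(i,j)$-entry equal to $e$ --- whence $\Delta(e_{i,j})=e_{i,j}$. The one place you genuinely diverge is the passage from matrix units to a general scalar matrix $x=\sum_{i,j}\lambda_{i,j}e_{i,j}$. You invoke \eqref{decompos} a third time for the pair $(e_{j,i},x)$ and compute the commutant of $e_{j,i}$; this route does work, and your structural claims check out: the commutant of $e_{j,i}$ consists of matrices whose $i$-th row and $j$-th column are concentrated on the diagonal with $c_{i,i}=c_{j,j}$, it is closed under inversion, and invertibility of $c_{i,i}$ with $(c^{-1})_{j,j}=c_{i,i}^{-1}$ follows from $(cc^{-1})_{i,i}=(c^{-1}c)_{i,i}=e$ combined with the same row structure for $c^{-1}$, giving $(cxc^{-1})_{i,j}=c_{i,i}\lambda_{i,j}c_{i,i}^{-1}=\lambda_{i,j}e$. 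The paper dispenses with all of this via a one-line multiplicativity trick on the same test pair:
\[
e_{j,i}\Delta(x)e_{j,i}=\Delta(e_{j,i})\Delta(x)\Delta(e_{j,i})
=\Phi_{e_{j,i},x}(e_{j,i}xe_{j,i})=\lambda_{i,j}\,e_{j,i},
\]
which immediately yields $e_{i,i}\Delta(x)e_{j,j}=\lambda_{i,j}e_{i,j}$ for all $i,j$, with no reference to \eqref{decompos}, no commutant computation, and no invertibility bookkeeping. The trade-off is instructive: your argument uses only the Benkart--Osborn structure of a single automorphism fixing $e_{j,i}$, while the paper's exploits the defining feature of $2$-locality --- that $e_{j,i}$ and $x$ are moved by the \emph{same} automorphism, so products of their images can be computed inside that automorphism --- and is correspondingly shorter. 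Both are complete proofs; yours is more computational but requires nothing beyond what you already established for the matrix units.
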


\begin{proof}

Take an automorphism  \(\Phi_{u,v}\) on   \(M_n(\mathcal{A})\) such that
$$
\Delta(u)=\Phi_{u,v}(u),\, \Delta(v)=\Phi_{u,v}(v),
$$
where $u, v$ are the elements from~\eqref{vv}. Replacing $\Delta$
by $\Phi^{-1}_{u,v}\circ \Delta$, if necessary, we can assume that
$\Delta(u)=u, \Delta(v)=v.$

Let $i, j\in \overline{1, n}.$ Take an automorphism \(\Phi =
\Phi_a\circ \overline{\psi}\) of the form \eqref{decompos}
such that
$$
\Delta(e_{i,j})=a \overline{\psi}(e_{ij}) a^{-1},\,\Delta(u)=a \overline{\psi}(u) a^{-1}.
$$
Since $\Delta(u)=u$ and \(\overline{\psi}(u)=u,\)   it follows
that \([a, u]=0,\)  and therefore \(a\) has a diagonal form, i.e.
$a=\sum\limits_{s=1}^n a_{s}e_{s,s},\, a_s\in \mathcal{A},\, s\in
\overline{1, n}.$

In the same way, but starting with the element $v$ instead of $u$,
we obtain
$$
\Delta(e_{i,j})=be_{i,j}b^{-1},
$$
where  $b$  has the  form~\eqref{trian}, depending on $e_{i,j}.$
So
$$
\Delta(e_{i,j})=ae_{i,j}a^{-1}=b e_{i,j}b^{-1}.
$$
Since
$$
ae_{i,j}a^{-1}= a_{i}a_{j}^{-1}e_{i,j}
$$
 and
 $$
[b e_{i,j}b^{-1}]_{i,j}=1,
$$
 it follows that $\Delta(e_{i,j})=e_{i,j}.$

Now let us take a matrix $x=\sum\limits_{i,j=1}^n
\lambda_{i,j}e_{i,j}\in M_n(\mathbb{C}).$ Then
\begin{eqnarray*}
e_{j,i}\Delta(x)e_{j,i} & = & \Delta(e_{j,i})\Delta(x)\Delta(e_{j,i}) =
\Phi_{e_{j,i}, x} (e_{j,i}) \Phi_{e_{i,j}, x} (x) \Phi_{e_{j,i}, x} (e_{j,i}) =\\
& = & \Phi_{e_{j,i}, x} (e_{j,i}  x e_{j,i})=\Phi_{e_{j,i}, x} (\lambda_{i,j}  e_{j,i})=\\
&=& \lambda_{i,j}  \Phi_{e_{j,i}, x} (e_{j,i}) =\lambda_{i,j}
e_{j,i},
\end{eqnarray*}
i.e. $ e_{i,i}\Delta(x)e_{j,j}=\lambda_{i,j}e_{i,j}$ for all $i,j\in \overline{1,n}.$
This means that $\Delta(x)=x.$ The proof is complete.
\end{proof}

Further in Lemmata~\ref{three}--\ref{adj} we assume that $\Delta$
is a 2-local automorphism  on   $M_n(\mathcal{A})$ such that
$\Delta|_{\mbox{sp}\{e_{i,j}\}_{i,j=1}^n}= id|_{\mbox{sp}\{e_{i,j}\}_{i,j=1}^n}.$

Let $\Delta_{i,j}$ be the restriction of $\Delta$ onto
$\mathcal{A}_{i,j}=e_{i,i}M_n(\mathcal{A})e_{j,j},$ where $1 \leq
i, j \leq n.$

\begin{lemma}\label{three}
$\Delta_{i,j}$ maps $\mathcal{A}_{i,j}$ into itself.
\end{lemma}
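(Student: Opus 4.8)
The plan is to show that, for $x\in\mathcal{A}_{i,j}$, every entry $[\Delta(x)]_{k,l}$ off the $(i,j)$-position vanishes, which forces $\Delta(x)=[\Delta(x)]_{i,j}\,e_{i,j}\in\mathcal{A}_{i,j}$. The naive route — writing $e_{k,k}\Delta(x)e_{l,l}=\Phi(e_{k,k})\Phi(x)\Phi(e_{l,l})$ for a single automorphism $\Phi$ — fails, since $2$-locality supplies an automorphism agreeing with $\Delta$ only on \emph{two} prescribed elements, whereas this product involves three ($e_{k,k}$, $x$, $e_{l,l}$). The device that circumvents this, already used at the end of the proof of Lemma~\ref{lemmatwo}, is to sandwich $\Delta(x)$ between two copies of the \emph{same} matrix unit, so that only two distinct elements must be matched.

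First I would fix $x\in\mathcal{A}_{i,j}$ and a pair $(k,l)\neq(i,j)$, and record the effect of the symmetric sandwich $e_{l,k}\,(\cdot)\,e_{l,k}$. A direct matrix computation gives $e_{l,k}\,y\,e_{l,k}=y_{k,l}\,e_{l,k}$ for every $y\in M_n(\mathcal{A})$, so this operation reads off the $(k,l)$-entry; in particular $e_{l,k}\,\Delta(x)\,e_{l,k}=[\Delta(x)]_{k,l}\,e_{l,k}$.

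Next I would invoke the $2$-local hypothesis for the pair $\{e_{l,k},x\}$: there is an automorphism $\Phi_{e_{l,k},x}$ with $\Phi_{e_{l,k},x}(e_{l,k})=\Delta(e_{l,k})$ and $\Phi_{e_{l,k},x}(x)=\Delta(x)$. Since $e_{l,k}\in\mbox{sp}\{e_{i,j}\}_{i,j=1}^n$, the running assumption $\Delta|_{\mbox{sp}\{e_{i,j}\}_{i,j=1}^n}=\mbox{id}$ yields $\Delta(e_{l,k})=e_{l,k}$, whence $\Phi_{e_{l,k},x}(e_{l,k})=e_{l,k}$. Because the same element $e_{l,k}$ occurs on both sides, multiplicativity of $\Phi_{e_{l,k},x}$ gives
$$
e_{l,k}\,\Delta(x)\,e_{l,k}=\Phi_{e_{l,k},x}(e_{l,k})\,\Phi_{e_{l,k},x}(x)\,\Phi_{e_{l,k},x}(e_{l,k})=\Phi_{e_{l,k},x}\bigl(e_{l,k}\,x\,e_{l,k}\bigr),
$$
and only the two elements $e_{l,k}$ and $x$ had to be matched — exactly what $2$-locality provides.

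Finally I would note that $e_{l,k}\,x\,e_{l,k}=x_{k,l}\,e_{l,k}=0$, since $x\in\mathcal{A}_{i,j}$ has $x_{k,l}=0$ whenever $(k,l)\neq(i,j)$. As $\Phi_{e_{l,k},x}$ is linear, the right-hand side is $0$, so $[\Delta(x)]_{k,l}\,e_{l,k}=0$ and hence $[\Delta(x)]_{k,l}=0$. Letting $(k,l)$ range over all pairs different from $(i,j)$ shows $\Delta(x)=[\Delta(x)]_{i,j}\,e_{i,j}\in\mathcal{A}_{i,j}$, as claimed. The only genuine obstacle is the opening observation: recognizing that the three-factor product must be collapsed to a two-element matching through the symmetric sandwich; once that is in place, the remaining computation is routine.
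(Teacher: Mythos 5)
Your proof is correct, but it takes a genuinely different route from the paper's. The paper matches $\Delta$ against the pair $\{x,u\}$ (with $u$ the diagonal element from~\eqref{vv}) and invokes the Benkart--Osborn decomposition~\eqref{decompos}, $\Phi=\Phi_a\circ\overline{\psi}$: since $\Delta(u)=u=\overline{\psi}(u)$, the implementing element $a$ commutes with $u$ and is hence diagonal, which yields the explicit formula $\Delta(x)=a_i\,\psi(x_{ij})\,a_j^{-1}e_{ij}\in\mathcal{A}_{i,j}$ in one stroke. You instead avoid the structure theory of $\Aut(M_n(\mathcal{A}))$ entirely and kill each entry $[\Delta(x)]_{k,l}$ with $(k,l)\neq(i,j)$ separately, via the symmetric sandwich $e_{l,k}\,(\cdot)\,e_{l,k}$ and the pair $\{e_{l,k},x\}$ — the same two-element matching trick the paper itself uses at the end of the proof of Lemma~\ref{lemmatwo} and again in Lemma~\ref{zerro}, so it is fully sanctioned by the standing hypotheses (in particular the normalization $\Delta|_{\mbox{sp}\{e_{i,j}\}_{i,j=1}^n}=\id$, which both proofs use, yours to get $\Delta(e_{l,k})=e_{l,k}$, the paper's to get $\Delta(u)=u$). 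Your computation is sound: $e_{l,k}\Delta(x)e_{l,k}=\Phi_{e_{l,k},x}(e_{l,k}xe_{l,k})=\Phi_{e_{l,k},x}(x_{k,l}e_{l,k})=0$ because $x_{k,l}=0$ for all $(k,l)\neq(i,j)$ — note this works even at diagonal positions, where $x_{k,l}\in\mathcal{A}$ is not a scalar, precisely because it vanishes and no homogeneity over $\mathcal{A}$ is needed. What each approach buys: yours is more elementary and self-contained for this lemma; the paper's yields the explicit spatial-times-lift form of $\Delta(x)$, which is the template for the arguments in Lemmata~\ref{lemmafour}--\ref{six} (though there the paper rederives the diagonality of $a$ anyway, so nothing downstream breaks if one substitutes your proof here).
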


\begin{proof}
Let us show that
\begin{equation}\label{compo}
\Delta_{i,j}(x) =e_{i,i}  \Delta(x) e_{j,j}
\end{equation}
for all $x\in \mathcal{A}_{i,j}.$

Take $x=x_{i,j}\in \mathcal{A}_{i,j},$  and consider an automorphism \(\Phi =
\Phi_a\circ \overline{\psi}\) of the form \eqref{decompos}
such that
$$
\Delta(x)=a\overline{\psi}(x)a^{-1},\,\Delta(u)=a\overline{\psi}(u)a^{-1},
$$
where \(u\) is the element from \eqref{vv}. Since $\Delta(u)=u$
and \(\overline{\psi}(u)=u,\) it follows that \([a, u]=0,\)  and
therefore \(a\) has a diagonal form. Then \(\Delta(x) =
a_{i}  \psi(x_{i j}) a_{j}^{-1} e_{i j}.\) This means that
\(\Delta(x) \in \mathcal{A}_{i,j}.\) The proof is complete.
\end{proof}

\begin{lemma}\label{lemmafour}
Let
$x=\sum\limits_{i=1}^n x_{i,i}$ be a diagonal matrix. Then
\begin{equation}\label{kkkk}
e_{k,k}\Delta(x)e_{k,k}=\Delta(x_{k,k})
\end{equation} for all $k\in
\overline{1,n}.$
\end{lemma}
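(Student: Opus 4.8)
The plan is to fix $k$ and work with the single automorphism supplied by the $2$-local property at the pair $(x,x_{k,k})$. First I would choose an automorphism $\Phi=\Phi_{x,x_{k,k}}$ with $\Phi(x)=\Delta(x)$ and $\Phi(x_{k,k})=\Delta(x_{k,k})$, and record two facts already at our disposal: by Lemma~\ref{three} the image $\Delta(x_{k,k})$ lies in $\mathcal{A}_{k,k}$, and, running the argument of Lemma~\ref{three} with the pair $(x,u)$ for the element $u$ from \eqref{vv}, the element $\Delta(x)$ is diagonal, say $\Delta(x)=\sum_{i=1}^n d_i e_{i,i}$ with $d_i\in\mathcal{A}$. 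The key algebraic observation is that, since $x$ is diagonal, $x_{k,k}=e_{k,k}x=xe_{k,k}$; applying the homomorphism $\Phi$ and writing $f:=\Phi(e_{k,k})$ gives $\Delta(x_{k,k})=f\,\Delta(x)=\Delta(x)\,f$, where $f$ is an idempotent that commutes with $\Delta(x)$, because $e_{k,k}$ commutes with the diagonal matrix $x$.

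With this in hand the statement reduces to the single identity $f\,\Delta(x)=e_{k,k}\Delta(x)e_{k,k}$, that is $(f-e_{k,k})\Delta(x)=0$. Since $f\Delta(x)=\Delta(x_{k,k})\in\mathcal{A}_{k,k}$ and $\Delta(x)$ is diagonal, comparing $(p,q)$-entries shows that the entries of $f$ satisfy $f_{pq}\,d_q=0$ for every $(p,q)\neq(k,k)$, while $f_{kk}d_k$ equals the $(k,k)$-entry of $\Delta(x_{k,k})$. Thus everything comes down to controlling the idempotent $f=\Phi(e_{k,k})$.

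The main obstacle is exactly this identification of $f$. My plan to overcome it is to treat first the case of an invertible diagonal $x$, so that every $d_q$ is invertible: indeed $\Delta(x)=\Phi_{x,u}(x)$ is the image of an invertible element under an automorphism, hence invertible, and being diagonal each $d_q$ is then invertible. For such $x$ the relations $f_{pq}d_q=0$ force $f_{pq}=0$ for all $(p,q)\neq(k,k)$, so $f=f_{kk}e_{k,k}$ is supported in the $(k,k)$-corner. On the other hand, writing $\Phi=\Phi_a\circ\overline{\psi}$ as in \eqref{decompos} and using $\overline{\psi}(e_{k,k})=e_{k,k}$, we get $f=a\,e_{k,k}\,a^{-1}$, so $f$ is similar to $e_{k,k}$ in $M_n(\mathcal{A})$. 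An idempotent that is simultaneously of the corner form $f_{kk}e_{k,k}$ and similar to $e_{k,k}$ must have $f_{kk}=1_{\mathcal{A}}$, whence $f=e_{k,k}$ and the desired identity \eqref{kkkk} follows at once.

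The remaining, and most delicate, step is to pass from invertible diagonal $x$ to an arbitrary diagonal $x$. Here I would exploit the homogeneity of $\Delta$ together with an approximation of a general diagonal matrix by invertible diagonal ones, using that the map $x\mapsto e_{k,k}\Delta(x)e_{k,k}-\Delta(x_{k,k})$ vanishes on the invertible diagonal matrices. Making this limiting argument rigorous, in the absence of an a priori continuity hypothesis on $\Delta$, is where I expect the real difficulty to lie, and is the point at which the Banach-algebra structure of $\mathcal{A}$ (and the hypotheses \textbf{(J)} and \textbf{(M)}) will most plausibly have to be invoked.
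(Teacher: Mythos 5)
Your reduction to the invertible case starts soundly: $f:=\Phi(e_{k,k})$ is indeed an idempotent commuting with the diagonal $\Delta(x)$, the entry computation $f_{p,q}d_q=0$ for $(p,q)\neq(k,k)$ is correct, and when all $d_q$ are invertible it does force $f=f_{k,k}e_{k,k}$. But the step that closes the argument is false: an idempotent of the corner form $f_{k,k}e_{k,k}$ that is similar to $e_{k,k}$ in $M_n(\mathcal{A})$ need \emph{not} have $f_{k,k}=e$. Take $\mathcal{A}=B(H)$ (a unital Banach algebra with both \textbf{(J)} and \textbf{(M)}, so squarely within the hypotheses of Theorem~\ref{mainlocal}), let $S$ be the unilateral shift and $P=1-SS^{*}$. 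Then $a=\left(\begin{smallmatrix}S&P\\0&S^{*}\end{smallmatrix}\right)$ is invertible in $M_{2}(\mathcal{A})$ with $a^{-1}=\left(\begin{smallmatrix}S^{*}&0\\P&S\end{smallmatrix}\right)$, and $ae_{1,1}a^{-1}=SS^{*}e_{1,1}$: a corner idempotent similar to $e_{1,1}$ whose corner entry $SS^{*}\neq e$. Such one-sided invertibles exist exactly in the algebras Theorem~\ref{kaplanal} targets (corners of the I$_{\infty}$, II$_{\infty}$ and III summands are properly infinite), so the gap is fatal as written. It is repairable, but only by importing the invertibility information the paper actually uses: pairing $x_{k,k}$ with $u$ from \eqref{vv}, as in Lemma~\ref{three}, shows $\Delta(x_{k,k})=a_{k}\psi((x_{k,k})_{k,k})a_{k}^{-1}e_{k,k}$ has invertible $(k,k)$-entry; since that entry is $f_{k,k}d_{k}$, the idempotent $f_{k,k}$ acquires a right inverse and hence equals $e$. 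The paper bypasses $\Phi(e_{k,k})$ altogether: from the intertwining relation $\Delta(x_{k,k})a=a\overline{\psi}(x_{k,k})$ with $x_{k,k}$ invertible it extracts $a_{i,k}=a_{k,i}=0$ for $i\neq k$, after which both sides of \eqref{kkkk} compute to $a_{k,k}\psi(x_{k,k})a_{k,k}^{-1}$.

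The second gap is the one you flag yourself, and it cannot be closed the way you propose. Your scheme needs \emph{every} diagonal entry $d_q$ of $\Delta(x)$ invertible, i.e.\ the whole diagonal matrix $x$ invertible, and you plan to remove this hypothesis by approximating a general diagonal matrix by invertible ones. But $\Delta$ carries no continuity whatsoever at this stage (it is not even known to be additive), so knowing that $x\mapsto e_{k,k}\Delta(x)e_{k,k}-\Delta(x_{k,k})$ vanishes on invertible diagonal matrices says nothing about limit points; neither \textbf{(J)} nor \textbf{(M)} supplies any topological leverage here. The paper never faces this problem, because its argument requires invertibility of the single entry $x_{k,k}$ only, arranged by the algebraic substitution $x_{k,k}\mapsto\lambda e+x_{k,k}$ with $|\lambda|>\|x_{k,k}\|$ — a scalar shift, not a limit — while the remaining diagonal entries of $x$ play no role in killing the $k$-th row and column of $a$. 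So the way out of your difficulty is not a limiting argument but a restructuring in which only the $(k,k)$-corner of $x$ must be invertible; once you make that change, your proof essentially collapses into the paper's.
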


\begin{proof}
Take an automorphism  \(\Phi\) of the form
\eqref{decompos}  such that
\begin{center}
\(\Delta(x)=a\overline{\psi}(x)a^{-1}\) and
\(\Delta(x_{k,k})=a\overline{\psi}(x_{kk})a^{-1}.\)
\end{center}
If  necessary, replacing $x_{k,k}$ by $\lambda e +x_{k,k}$ ($|\lambda|> ||x_{k,k}||$) we can assume that $x_{k,k}$ is invertible.
Using the equality \eqref{compo}, we obtain that $\Delta(x_{k,k})\in \mathcal{A}_{k,k}.$ Since
$\Delta(x_{k,k})a=a\overline{\psi}(x_{kk}),$
\begin{eqnarray*}
0 & = & (\Delta(x_{k,k})a)_{k,i} = x_{k,k} a_{k,i}, \\
0 & = & (a\overline{\psi}(x_{k,k}))_{i,k} =a_{i,k} \overline{\psi} (x_{k,k})
\end{eqnarray*}
for all $i\neq k.$ Since $x_{k,k}$ and $\overline{\psi}(x_{k,k})$ are invertible, we have that
$a_{i,k}=a_{k,i}=0$ for all $i\neq k.$ Further
\begin{eqnarray*}
\Delta(x_{k,k}) & = & e_{k,k}\Delta(x_{k,k})e_{k,k}=e_{k,k} a
\overline{\psi} (x_{k,k}) a^{-1}e_{k,k} =a_{k,k} \overline{\psi} (x_{k,k}) a_{k,k}^{-1}.
\end{eqnarray*}
Since  $x$ is a diagonal matrix and $a_{i,k}=a_{k,i}=0$ for all $i\neq k.$ we get
\begin{eqnarray*}
e_{k,k}\Delta(x)e_{k,k} & = &  e_{k,k} a
\overline{\psi} (x) a^{-1}e_{k,k} =a_{k,k} \overline{\psi} (x_{k,k}) a_{k,k}^{-1}.
\end{eqnarray*}
Thus $e_{k,k}\Delta(x)e_{k,k}=\Delta(x_{k,k}).$ The proof is
complete.
\end{proof}

\begin{lemma}\label{lemmafive}
Let  $x=x_{i,i}\in
\mathcal{A}_{i,i}.$ Then
\begin{equation}\label{jiji}
e_{j,i}\Delta(x)e_{i,j}=\Delta(e_{j,i}xe_{i,j})
\end{equation}
for every \(j\in \{1, \cdots, n\}.\)
\end{lemma}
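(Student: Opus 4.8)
The plan is to reduce the identity \eqref{jiji} to a statement about a single diagonal matrix, where Lemma~\ref{lemmafour} already controls the action of $\Delta$ on each diagonal block, and then to transport information between the $(i,i)$ and $(j,j)$ corners by means of a single transposition that $\Delta$ fixes.

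First I would dispose of the trivial case $i=j$: there $e_{j,i}xe_{i,j}=x_{i,i}=x$ and, by Lemma~\ref{three}, $\Delta(x)\in\mathcal{A}_{i,i}$, so both sides of \eqref{jiji} collapse to $\Delta(x)$. Assume henceforth $i\neq j$ and introduce the self-inverse permutation matrix
$$
p=\sum_{k\neq i,j}e_{k,k}+e_{i,j}+e_{j,i},
$$
which lies in $\mathrm{sp}\{e_{s,t}\}_{s,t=1}^n$, so that $\Delta(p)=p$ by the standing hypothesis, and which satisfies $p^2=1$. A direct computation gives $pe_{i,i}=e_{j,i}$ and $e_{i,i}p=e_{i,j}$, hence $pxp=e_{j,i}xe_{i,j}$. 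I would then consider the symmetric diagonal matrix $z=x+pxp=x_{i,i}+\bigl(e_{j,i}xe_{i,j}\bigr)$; since $p^2=1$ it is invariant under conjugation by $p$, i.e.\ $pzp=z$.

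Next I would apply the $2$-local property to the pair $(z,p)$: choose an automorphism $\Phi$ with $\Phi(z)=\Delta(z)$ and $\Phi(p)=\Delta(p)=p$. Because $\Phi$ is multiplicative and $pzp=z$,
$$
\Delta(z)=\Phi(z)=\Phi(pzp)=\Phi(p)\Phi(z)\Phi(p)=p\,\Delta(z)\,p,
$$
so $\Delta(z)$ commutes with $p$. Since $z$ is diagonal with $z_{i,i}=x$ and $z_{j,j}=e_{j,i}xe_{i,j}$, Lemma~\ref{lemmafour} yields $e_{i,i}\Delta(z)e_{i,i}=\Delta(x)$ and $e_{j,j}\Delta(z)e_{j,j}=\Delta(e_{j,i}xe_{i,j})$. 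Using $\Delta(z)=p\Delta(z)p$ together with $e_{j,j}p=e_{j,i}$, $pe_{j,j}=e_{i,j}$, and $e_{j,i}=e_{j,i}e_{i,i}$, $e_{i,j}=e_{i,i}e_{i,j}$, I would compute
$$
\Delta(e_{j,i}xe_{i,j})=e_{j,j}\Delta(z)e_{j,j}=e_{j,i}\bigl(e_{i,i}\Delta(z)e_{i,i}\bigr)e_{i,j}=e_{j,i}\Delta(x)e_{i,j},
$$
which is exactly \eqref{jiji}.

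The one point requiring care — and the conceptual heart of the argument — is the passage from the single controllable pair $(z,p)$ to a statement about the genuinely two-variable quantity $e_{j,i}xe_{i,j}$. The device that makes this work is symmetrizing $x$ into $z=x+pxp$: this renders $z$ invariant under conjugation by $p$, so that the single equality $\Phi(z)=\Delta(z)$ already forces $\Delta(z)$ to commute with $p$, after which Lemma~\ref{lemmafour} reads off the two corner values and finishes the proof. I expect that no appeal to the Banach-algebra structure of $\mathcal{A}$ or to properties \textbf{(J)}, \textbf{(M)} is needed at this stage.
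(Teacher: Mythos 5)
Your proof is correct, and for the essential case $i\neq j$ it takes a genuinely different route from the paper's. Both arguments start from the same symmetrized diagonal element (your $z=x+e_{j,i}xe_{i,j}$ is the paper's $y$) and both invoke Lemma~\ref{lemmafour} to read off its two diagonal corners under $\Delta$; the difference is in how the $(i,i)$ and $(j,j)$ corners are tied together. The paper pairs $z$ with the nilpotent element $v$ of \eqref{vv} and invokes the factorization \eqref{decompos}: since $\Delta(v)=v=\overline{\psi}(v)$, the implementing element $a$ must have the triangular Toeplitz form \eqref{trian}, whence both sides of \eqref{jiji} are computed explicitly as $a_1\overline{\psi}(x)a_1^{-1}e_{j,j}$. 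You instead pair $z$ with the transposition $p=\sum_{k\neq i,j}e_{k,k}+e_{i,j}+e_{j,i}$, which $\Delta$ fixes by the standing hypothesis $\Delta|_{\mbox{sp}\{e_{i,j}\}_{i,j=1}^n}=id$, and use only multiplicativity of the chosen automorphism together with $pzp=z$ to force $\Delta(z)=p\,\Delta(z)\,p$; conjugation by $p$ then transports the $(i,i)$ corner to the $(j,j)$ corner, and your identities $e_{j,j}p=e_{j,i}$, $pe_{j,j}=e_{i,j}$ check out. What your version buys is a more elementary and self-contained step: it needs no second appeal to \eqref{decompos} or to the form \eqref{trian} at this stage (that machinery enters only through your citation of Lemma~\ref{lemmafour}), and it makes the mechanism transparent --- $\Delta(z)$ inherits the $p$-symmetry of $z$. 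The paper's version stays uniformly within its $\Phi=\Phi_a\circ\overline{\psi}$ calculus, which it then reuses verbatim with the same element $v$ in Lemma~\ref{six}. Your disposal of the case $i=j$ via Lemma~\ref{three} is also fine (the paper refers that case to Lemma~\ref{lemmafour}; both work), and you are right that neither property \textbf{(J)} nor \textbf{(M)} is needed here.
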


\begin{proof}
The case when $i=j$
has been already proved (see
Lemma~\ref{lemmafour}).

Suppose that $i\neq j.$  For an arbitrary element $x=x_{i,i}\in
\mathcal{A}_{i,i},$  consider  $y=x+e_{j,i}xe_{i,j}\in
\mathcal{A}_{i,i}+\mathcal{A}_{j,j}.$ Take an automorphism  \(\Phi\) of the form
\eqref{decompos}  such that
\begin{center}
\(\Delta(y)=a\overline{\psi}(y)a^{-1}\) and
\(\Delta(v)=a\overline{\psi}(v)a^{-1},\)
\end{center}
where $v$ is the element from~\eqref{vv}. Since $\Delta(v)=v$ and
\(\overline{\delta}(v)=v,\) it follows that $a$ has the
form~\eqref{trian}. By Lemma~\ref{lemmafour} we obtain that
\begin{eqnarray*}
e_{j,i}\Delta(x)e_{i,j} & = &
e_{j,i}e_{i,i}\Delta(y)e_{i,i}e_{i,j} = a_1\overline{\psi}(y) a_1^{-1} e_{j,j},\\
 \Delta(e_{j,i}xe_{i,j}) & = &
e_{j,j}\Delta(y)e_{j,j}=a_1\overline{\psi}(x)a_1^{-1} e_{j,j}.
\end{eqnarray*}
The proof is complete.
\end{proof}

Further in Lemmata~\ref{six}--\ref{cc} we assume that $n\geq 3.$

\begin{lemma}\label{six}
 $\Delta_{i,i}$ is additive for all $i\in \overline{1,n}.$
\end{lemma}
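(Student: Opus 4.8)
The plan is to reduce the statement to additivity of a single \emph{corner map}. By Lemma~\ref{three} the restriction $\Delta_{i,i}$ maps $\mathcal{A}_{i,i}$ into itself, so, writing each element of $\mathcal{A}_{i,i}$ as $a\,e_{i,i}$ with $a\in\mathcal{A}$, there is a well-defined map $\delta\colon\mathcal{A}\to\mathcal{A}$ with $\Delta(a\,e_{i,i})=\delta(a)\,e_{i,i}$; the assertion is exactly $\delta(a+b)=\delta(a)+\delta(b)$. By Lemma~\ref{lemmafive} all the diagonal corner maps are mutually conjugate by matrix units, so it suffices to treat one index, say $i=1$, and this is where $n\geq 3$ enters: I will exploit two further indices $2,3$.

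First I would manufacture the sum $a+b$ as a single matrix entry of a product whose factors I can control. Put $p=e_{1,2}+e_{1,3}$ and $q=a\,e_{2,1}+b\,e_{3,1}$; then a direct computation gives $pq=(a+b)e_{1,1}$, while $p$ lies in the linear span of the matrix units, so $\Delta(p)=p$. Choosing an automorphism $\Phi=\Phi_{p,q}$ with $\Phi(p)=\Delta(p)$ and $\Phi(q)=\Delta(q)$ and using that $\Phi$ is multiplicative yields $\Phi(pq)=\Delta(p)\Delta(q)=p\,\Delta(q)$. Next I would pin down $\Delta(q)$: pairing $q$ with the diagonal element $u$ of \eqref{vv} and using, as in Lemmata~\ref{three}--\ref{lemmafour}, that the implementing invertible element commutes with $u$ and is therefore diagonal, one finds that $\Delta(q)$ is supported on the two positions $(2,1)$ and $(3,1)$, say $\Delta(q)=\alpha\,e_{2,1}+\beta\,e_{3,1}$. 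Hence $p\,\Delta(q)=(\alpha+\beta)e_{1,1}$, and compressing the multiplicativity identity to the $(1,1)$-corner turns it into a clean additive relation in the single entry.

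The remaining and decisive step is to identify $\alpha$, $\beta$ and the $(1,1)$-value of $\Phi_{p,q}(pq)$ with $\delta(a)$, $\delta(b)$ and $\delta(a+b)$ respectively. Writing $\Phi_{p,q}=\Phi_c\circ\overline{\psi}$ as in \eqref{decompos}, the constraint $\Phi(p)=p$ forces $[c,p]=0$, which restricts $c$ and lets me compute the $(1,1)$-compression of $\Phi_{p,q}$ on $\mathcal{A}_{1,1}$ explicitly; since the inner automorphism $\psi$ of $\mathcal{A}$ is additive, this compression is automatically additive in the entry. Separating $\alpha e_{2,1}$ and $\beta e_{3,1}$ from $\Delta(q)$ by the orthogonal idempotents $e_{2,2}q=a\,e_{2,1}$ and $e_{3,3}q=b\,e_{3,1}$, and transporting the corners $(2,1)$, $(3,1)$ back to $(1,1)$ via Lemma~\ref{lemmafive}, would then deliver $\delta(a+b)=\delta(a)+\delta(b)$.

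I expect the genuine obstacle to be precisely this last identification. The automorphism $\Phi_{p,q}$ is guaranteed to agree with $\Delta$ only at the two points $p$ and $q$, not at the product $pq=(a+b)e_{1,1}$, so the corner value of $\Phi_{p,q}(pq)$ need not a priori equal $\Delta((a+b)e_{1,1})$; the same gap appears when I try to split $\Delta(q)$ into $\Delta(a\,e_{2,1})$ and $\Delta(b\,e_{3,1})$. This is the characteristic difficulty of every $2$-local argument: passing from agreement at two points to a conclusion about their sum. Overcoming it will require engineering the defining pair richly enough — in the spirit of the pair $(u,v)$ in Lemma~\ref{lemmatwo}, which already forced agreement on all matrix units — so that a single automorphism is compelled to coincide with $\Delta$ simultaneously on $a\,e_{1,1}$, $b\,e_{1,1}$ and $(a+b)e_{1,1}$; once such an automorphism is available, additivity is inherited from it for free. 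I would spend most of the effort making this simultaneous-agreement step precise, using the corner-preservation of Lemmata~\ref{three}--\ref{lemmafive} together with the diagonal and triangular normal forms forced by commuting with $u$ and $v$.
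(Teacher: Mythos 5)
Your proposal is not a complete proof: the step you yourself flag as ``the remaining and decisive step'' is precisely where the argument fails, and you do not supply the mechanism to close it. Concretely, pairing $q=a\,e_{2,1}+b\,e_{3,1}$ with $u$ does force the implementing element $c$ to be diagonal, but then $\Delta(q)=c_2\psi(a)c_1^{-1}e_{2,1}+c_3\psi(b)c_1^{-1}e_{3,1}$ with \emph{three unrelated} diagonal entries $c_1,c_2,c_3$ and an unknown $\psi$, all depending on the pair $(q,u)$; nothing identifies $\alpha=c_2\psi(a)c_1^{-1}$ with $\delta(a)$ or $\beta=c_3\psi(b)c_1^{-1}$ with $\delta(b)$, since $\Delta(a\,e_{2,1})$ and $\Delta(b\,e_{3,1})$ are governed by \emph{different} automorphisms. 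Likewise $\Phi_{p,q}(pq)=p\,\Delta(q)$ carries no a priori relation to $\Delta(pq)=\Delta((a+b)e_{1,1})$, because $\Phi_{p,q}$ agrees with $\Delta$ only at the two points $p$ and $q$. So the multiplicative route through the product $pq$ yields an additive relation among quantities that you cannot match to values of $\Delta$, and the proposal ends as a correctly diagnosed plan rather than a proof.

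The missing idea --- which the paper uses and which you half-anticipate in your closing paragraph --- is to encode $x$, $y$ and $x+y$ as \emph{diagonal entries of a single element} and to pair it with $v$ rather than $u$. Take distinct $k,s$ with $(k-i)(s-i)\neq 0$ (this is where $n\geq 3$ enters) and the diagonal $z$ with $z_{i,i}=x+y$, $z_{k,k}=x$, $z_{s,s}=y$, and choose $\Phi=\Phi_a\circ\overline{\psi}$ agreeing with $\Delta$ at $z$ and at $v$ from \eqref{vv}. Agreement at $v$ forces $a$ into the triangular Toeplitz form \eqref{trian}, whose diagonal entries are all equal to the \emph{same} $a_1$ --- this is exactly the uniformity that a merely diagonal $a$ (obtained from $u$) does not provide. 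The bridge you lack is then supplied by the already-proven Lemmata~\ref{lemmafour} and \ref{lemmafive}: identity \eqref{kkkk} converts the corners of the single value $\Delta(z)$ into the values of $\Delta$ at the separate entries, and \eqref{jiji} transports the $(k,k)$- and $(s,s)$-corners back to position $(i,i)$. All three of $\Delta_{i,i}(x+y)$, $\Delta_{i,i}(x)$, $\Delta_{i,i}(y)$ come out as $a_1\overline{\psi}(\,\cdot\,)a_1^{-1}e_{i,i}$ with the same $a_1$ and the same additive $\psi$, and additivity follows at once; no product, no splitting of $\Delta(q)$, and no identification of corner maps across positions beyond \eqref{jiji} is needed.
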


\begin{proof}
Let $i\in \overline{1,n}.$ Since $n\geq 3,$ we
can take different numbers $k, s$ such that \linebreak
$(k-i)(s-i)\neq 0.$

For arbitrary $x, y\in \mathcal{A}_{i,i}$  consider the diagonal
element $z\in
\mathcal{A}_{i,i}+\mathcal{A}_{k,k}+\mathcal{A}_{s,s}$ such that
\(z_{ i,i} = x+y,\, z_{k, k} = x,\, z_{s, s} = y.\) Take an automorphism  \(\Phi\) of the form
\eqref{decompos}  such that
\begin{center}
\(\Delta(z)=a\overline{\psi}(z)a^{-1}\) and
\(\Delta(v)=a\overline{\psi}(v)a^{-1},\)
\end{center}
where $v$ is  the element from~\eqref{vv}. Since $\Delta(v)=v$ and
\(\overline{\delta}(v)=v,\) it follows that $a$ has the
form~\eqref{trian}. Using Lemmata~\ref{lemmafour} and
\ref{lemmafive} we obtain that
\begin{eqnarray*}
\Delta_{i,i}(x+y) &   \stackrel{\eqref{kkkk}}{=}  &
e_{i,i}\Delta(z)e_{i,i}= a_1 \overline{\psi}(x+y) a_1^{-1} e_{i,i},   \\
\Delta_{i,i}(x) &  \stackrel{\eqref{jiji}}{=}  & e_{i,k}\Delta(e_{k,i}x
e_{i,k})e_{k,i}\stackrel{\eqref{kkkk}}{=} e_{i,k}e_{k,k}\Delta(z)e_{k,k}e_{k,i}=\\
&  =  & a_1 \overline{\psi}(x) a_1^{-1} e_{i,i},\\
\Delta_{i,i}(y) &  \stackrel{\eqref{jiji}}{=} &
e_{i,s}\Delta(e_{s,i}y
e_{i,s})e_{s,i}\stackrel{\eqref{kkkk}}{=} e_{i,s}e_{s,s}\Delta(z)e_{s,s}e_{s,i}=\\
& = & a_1 \overline{\psi}(y) a_1^{-1} e_{i,i}.
\end{eqnarray*} Hence
$$
\Delta_{i,i}(x+y)=\Delta_{i,i}(x)+\Delta_{i,i}(y).
$$
The proof is complete.
\end{proof}

As it was mentioned in the beginning of the section  any additive
2-local automorphism  is a Jordan automorphism. Since
$\mathcal{A}_{i,i}\cong \mathcal{A}$ has the property
\textbf{(J)}, by Lemma~\ref{six} there exists a decomposition  $\mathcal{A}=\mathcal{A}_1\oplus\mathcal{A}_2$
such that
$$
x\in \mathcal{A}\mapsto p_1(\Delta_{i,i}(x))\in \mathcal{A}_1
$$
is a homomorphism and
$$
x\in \mathcal{A}\mapsto p_2(\Delta_{i,i}(x))\in \mathcal{A}_2
$$
is an anti-homomorphism.

Suppose that $p_2\neq 0.$ By the condition \textbf{(M)} we can find elements $x, y \in \mathcal{A}$ such that $xy=0$ and $yx\neq 0.$
Then
$$
0=p_2(\Delta_{i,i}(xy))=p_2(\Delta_{i,i}(y))p_2(\Delta_{i,i}(x)).
$$
On the other hand,
$$
\Delta_{i,i}(y)\Delta_{i,i}(x)=\Phi_{x,y}(y)\Phi_{x,y}(x)=\Phi_{x,y}(yx)\neq 0.
$$
From this contradiction we obtain that $p_2=0.$ So, we have the following

\begin{lemma}\label{lemmaseven}
 $\Delta_{i,i}$ is an automorphism  for all $i\in \overline{1,n}.$
\end{lemma}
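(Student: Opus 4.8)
The plan is to upgrade the additive Jordan map $\Delta_{i,i}$ to a genuine algebra automorphism by eliminating its anti-homomorphic part. Since $\Delta_{i,i}$ is homogeneous and satisfies $\Delta_{i,i}(x^2)=\Delta_{i,i}(x)^2$, Lemma~\ref{six} makes it a linear Jordan automorphism of $\mathcal{A}_{i,i}\cong\mathcal{A}$. The first step is therefore to apply property \textbf{(J)} to this Jordan automorphism, producing a decomposition $\mathcal{A}=\mathcal{A}_1\oplus\mathcal{A}_2$ for which $p_1\circ\Delta_{i,i}$ is a homomorphism and $p_2\circ\Delta_{i,i}$ is an anti-homomorphism.

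The decisive step, and the main obstacle, is to show that the anti-homomorphic summand is trivial, i.e. $p_2=0$. I would argue by contradiction: assuming $p_2\neq 0$, property \textbf{(M)} furnishes elements $x,y$ with $xy=0$ but $yx\neq 0$. The anti-homomorphism property applied to the product $xy$ forces
$$p_2(\Delta_{i,i}(y))\,p_2(\Delta_{i,i}(x))=p_2(\Delta_{i,i}(xy))=0,$$
whereas evaluating the $2$-local automorphism on the pair $(x,y)$ yields a single automorphism $\Phi_{x,y}$ with $\Delta_{i,i}(x)=\Phi_{x,y}(x)$ and $\Delta_{i,i}(y)=\Phi_{x,y}(y)$, so that
$$\Delta_{i,i}(y)\Delta_{i,i}(x)=\Phi_{x,y}(y)\Phi_{x,y}(x)=\Phi_{x,y}(yx)\neq 0.$$
The tension between these two identities is exactly what property \textbf{(M)} is designed to exploit, and it rules out $p_2\neq 0$.

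Once $p_2=0$ is established, the map reduces to $\Delta_{i,i}=p_1\circ\Delta_{i,i}$, which is a homomorphism. Being already a Jordan automorphism, and in particular bijective, a homomorphic $\Delta_{i,i}$ is an algebra automorphism of $\mathcal{A}_{i,i}$; since $i\in\overline{1,n}$ was arbitrary, the conclusion holds for every $i$. I expect the only delicate point to be the bookkeeping in the contradiction step, namely ensuring that the product $\Delta_{i,i}(y)\Delta_{i,i}(x)$ genuinely detects the non-vanishing of $p_2$ rather than being absorbed into the homomorphic summand $\mathcal{A}_1$; everything else is a direct consequence of Lemma~\ref{six} together with properties \textbf{(J)} and \textbf{(M)}.
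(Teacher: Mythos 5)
Your proposal coincides step-for-step with the paper's own proof: additivity from Lemma~\ref{six} together with homogeneity makes $\Delta_{i,i}$ a Jordan automorphism, property \textbf{(J)} gives the decomposition $\mathcal{A}=\mathcal{A}_1\oplus\mathcal{A}_2$, and the paper rules out $p_2\neq 0$ using exactly the two displays you write, namely $p_2(\Delta_{i,i}(y))p_2(\Delta_{i,i}(x))=p_2(\Delta_{i,i}(xy))=0$ set against $\Delta_{i,i}(y)\Delta_{i,i}(x)=\Phi_{x,y}(yx)\neq 0$. The delicate point you honestly flag but do not close --- that since $\Delta_{i,i}(y)\Delta_{i,i}(x)=p_1(\Delta_{i,i}(y))p_1(\Delta_{i,i}(x))+p_2(\Delta_{i,i}(y))p_2(\Delta_{i,i}(x))$, the nonzero product $\Phi_{x,y}(yx)$ could a priori live entirely in the homomorphic summand $\mathcal{A}_1$, so the two identities are not by themselves contradictory --- is left equally unaddressed in the paper, which asserts the contradiction in precisely the same form (a genuine repair would choose the zero-product pair so that its $\Delta$-images lie in $\mathcal{A}_2$, using that property \textbf{(M)}-type elements exist in the relevant summand); relative to the paper's proof, your reproduction is therefore faithful.
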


Denote by $\mathcal{D}_n(\mathcal{A})$ the set of all diagonal
matrices from $M_n(\mathcal{A}),$ i.e. the set of all matrices of
the following form
\[
x=\left( \begin{array}{cccccc}
x_1 & 0 & 0 & \ldots & 0 \\
0 & x_2 &  0 & \ldots & 0\\
\vdots& \vdots& \vdots &\vdots & \vdots\\
0 & 0 &\ldots & x_{n-1} & 0\\
0 & 0 &\ldots & 0 & x_n
\end{array} \right).
\]

Let us consider an operator $\overline{\Delta_{1,1}}$ of the
form~\eqref{cender}. By Lemmata~\ref{lemmafour} and
\ref{lemmafive} we obtain that

\begin{lemma}\label{adj}
$\Delta|_{\mathcal{D}_n(\mathcal{A})}=\overline{\Delta_{1,1}}|_{\mathcal{D}_n(\mathcal{A})}$
and $\overline{\Delta_{1,1}}|_{\mbox{sp}\{e_{i,j}\}_{i,j=1}^n}=id|_{\mbox{sp}\{e_{i,j}\}_{i,j=1}^n}.$
\end{lemma}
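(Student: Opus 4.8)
We must establish two equalities. The plan is to reduce everything to the diagonal case already handled in Lemmata~\ref{lemmafour}--\ref{lemmafive}, together with the normalization $\Delta|_{\mathrm{sp}\{e_{i,j}\}}=\mathrm{id}$ that is standing throughout Lemmata~\ref{three}--\ref{adj}.

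\emph{The second equality.}
Recall that $\overline{\Delta_{1,1}}$ is defined by formula~\eqref{cender} applied to the map $\Delta_{1,1}=\Delta|_{\mathcal{A}_{1,1}}$, namely
$\overline{\Delta_{1,1}}\bigl(\sum_{i,j}x_{ij}e_{ij}\bigr)=\sum_{i,j}\Delta_{1,1}(x_{ij})e_{ij}$, where we identify $\mathcal{A}_{1,1}\cong\mathcal{A}$. The matrix units $e_{i,j}$ have entries in $\{0,1\}\subseteq\mathbb{C}\cdot e$, so the relevant scalar entry is the identity of $\mathcal{A}$, which $\Delta_{1,1}$ fixes because $\Delta_{1,1}(e)=\Delta(e_{1,1})=e_{1,1}$ by the normalization and the fact that $\Delta_{1,1}$ is an automorphism of $\mathcal{A}_{1,1}$ (Lemma~\ref{lemmaseven}). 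Hence $\overline{\Delta_{1,1}}(e_{i,j})=\Delta_{1,1}(1_{ij})e_{ij}=e_{i,j}$ for every $i,j$, and by linearity $\overline{\Delta_{1,1}}$ agrees with the identity on $\mathrm{sp}\{e_{i,j}\}$. This part is essentially bookkeeping.

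\emph{The first equality.}
Let $x=\sum_{k=1}^{n}x_{k,k}\in\mathcal{D}_n(\mathcal{A})$ be diagonal. By definition of $\overline{\Delta_{1,1}}$ we have $\overline{\Delta_{1,1}}(x)=\sum_{k=1}^{n}\Delta_{1,1}(x_{k,k})e_{k,k}$, where inside $\Delta_{1,1}$ we view each $x_{k,k}$ as an element of $\mathcal{A}\cong\mathcal{A}_{1,1}$. So it suffices to prove the entrywise identity
\[
e_{k,k}\,\Delta(x)\,e_{k,k}=\Delta_{1,1}(x_{k,k})\quad\text{for all }k,
\]
after transporting $x_{k,k}$ from $\mathcal{A}_{1,1}$ back to $\mathcal{A}_{k,k}$ by the matrix-unit conjugation. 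Lemma~\ref{lemmafour} already gives $e_{k,k}\Delta(x)e_{k,k}=\Delta(x_{k,k})=\Delta_{k,k}(x_{k,k})$, so what remains is to identify $\Delta_{k,k}$ with the copy of $\Delta_{1,1}$ acting on the $(k,k)$-corner. This is exactly the content of Lemma~\ref{lemmafive}: taking there $i=1$, $j=k$ (or $i=k$, $j=1$) and using $e_{k,1}x_{1,1}e_{1,k}$ to move a $(1,1)$-entry to the $(k,k)$-corner, we get $\Delta(e_{k,1}\,a\,e_{1,k})=e_{k,1}\Delta_{1,1}(a)e_{1,k}$ for $a\in\mathcal{A}_{1,1}$, which says precisely that $\Delta_{k,k}$ is the transported copy of $\Delta_{1,1}$. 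Combining, $e_{k,k}\Delta(x)e_{k,k}=e_{k,1}\Delta_{1,1}(x_{1,1}\text{-image of }x_{k,k})e_{1,k}$, matching the $k$-th diagonal block of $\overline{\Delta_{1,1}}(x)$.

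\emph{Main obstacle.}
The only delicate point is the consistent identification $\mathcal{A}_{i,i}\cong\mathcal{A}$ across different corners. One must check that the map $a\mapsto e_{k,1}\,a\,e_{1,k}$ used to define the isomorphisms is compatible with the way $\Delta$ acts corner-to-corner, i.e.\ that Lemma~\ref{lemmafive} genuinely yields $\Delta_{k,k}=\theta_k\circ\Delta_{1,1}\circ\theta_k^{-1}$ for the canonical conjugation isomorphism $\theta_k$, rather than $\Delta_{k,k}$ being some a~priori unrelated automorphism of $\mathcal{A}_{k,k}$. Everything else is a direct substitution into the already-proven formulas~\eqref{kkkk} and~\eqref{jiji}, and no new estimate or structural input is needed.
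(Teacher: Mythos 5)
Your treatment of the second equality and of the corner identification is correct and follows the paper's (very terse) proof, which simply cites Lemmata~\ref{lemmafour} and \ref{lemmafive}. In particular, you rightly isolate the one genuine subtlety: Lemma~\ref{lemmafive} with $i=1$, $j=k$ yields $\Delta_{k,k}=\theta_k\circ\Delta_{1,1}\circ\theta_k^{-1}$ for the canonical conjugation $\theta_k(a)=e_{k,1}\,a\,e_{1,k}$, and combined with formula~\eqref{kkkk} this identifies the \emph{diagonal} entries of $\Delta(x)$ with those of $\overline{\Delta_{1,1}}(x)$; likewise $\overline{\Delta_{1,1}}(e_{i,j})=e_{i,j}$ follows from $\Delta_{1,1}(e)=e$ and the linearity of the map defined by \eqref{cender}, since $\Delta_{1,1}$ is an automorphism by Lemma~\ref{lemmaseven}.

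There is, however, a gap in the first part: you reduce the matrix equality $\Delta(x)=\overline{\Delta_{1,1}}(x)$ to the diagonal-entry identities via ``it suffices to prove the entrywise identity $e_{k,k}\Delta(x)e_{k,k}=\Delta_{1,1}(x_{k,k})$,'' but this suffices only if one also knows that the off-diagonal compressions $e_{k,k}\Delta(x)e_{s,s}$ ($k\neq s$) vanish, since $\overline{\Delta_{1,1}}(x)$ is diagonal by construction. Neither Lemma~\ref{lemmafour} (which controls only the $(k,k)$-entries, with an implementing element $a$ depending on $k$) nor Lemma~\ref{lemmafive} gives this, and Lemma~\ref{three} applies only to elements supported in a single corner, not to a diagonal matrix with several nonzero blocks; your closing claim that ``no new structural input is needed'' is exactly where the omission hides. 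The missing step is the paper's recurring $u$-trick: since $u=\sum_{i=1}^{n}2^{-i}e_{i,i}$ from \eqref{vv} lies in $\mbox{sp}\{e_{i,j}\}_{i,j=1}^{n}$, the standing normalization gives $\Delta(u)=u$; choosing $\Phi_{x,u}=\Phi_a\circ\overline{\psi}$ as in \eqref{decompos} with $\Delta(x)=a\overline{\psi}(x)a^{-1}$ and $u=\Delta(u)=a\overline{\psi}(u)a^{-1}=aua^{-1}$ (note $\overline{\psi}(u)=u$ because $\psi$ is unital), one gets $[a,u]=0$, hence $a$ is diagonal, hence $\Delta(x)=a\overline{\psi}(x)a^{-1}$ is diagonal whenever $x\in\mathcal{D}_n(\mathcal{A})$. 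With this one-line supplement your argument is complete and coincides with the paper's intended proof.
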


Now we are in position to pass to the second step of our proof. In
this step we show that if a 2-local automorphism  $\Delta$ satisfies
the following conditions
\begin{center}
\(\Delta|_{\mathcal{D}_n(\mathcal{A})}\equiv id|_{\mathcal{D}_n(\mathcal{A})}\) and
\(\Delta|_{\mbox{sp}\{e_{i,j}\}_{i,j=1}^n}\equiv id|_{\mbox{sp}\{e_{i,j}\}_{i,j=1}^n},\)
\end{center}
then it is the identical map.

 In following five Lemmata~\ref{ss}-\ref{twotwo}  we
shall consider 2-local automorphisms which satisfy the latter equalities.

We denote by $e$ the unit of the algebra $\mathcal{A}.$

\begin{lemma}\label{ss}
Let $x\in M_n(\mathcal{A}).$ Then $\Delta(x)_{k,k}=x_{k,k}$ for all
\(k\in \overline{1,n}.\)
\end{lemma}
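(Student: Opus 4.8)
The plan is to fix $k \in \overline{1,n}$ and exploit the standing hypothesis that $\Delta$ is the identity on every diagonal matrix, by feeding $\Delta$ a pair $(x,z)$ in which $z$ is a carefully chosen diagonal matrix. Concretely, I would take $z\in\mathcal{D}_n(\mathcal{A})$ with $z_{k,k}=x_{k,k}+\lambda e$ and $z_{i,i}=\mu_i e$ for $i\neq k$, where the $\mu_i$ are pairwise distinct scalars and $\lambda$ is a scalar chosen large enough that $x_{k,k}+(\lambda-\mu_i)e$ is invertible in $\mathcal{A}$ for every $i\neq k$ (which holds as soon as $|\lambda-\mu_i|>\|x_{k,k}\|$). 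Since $z$ is diagonal, the standing hypothesis gives $\Delta(z)=z$.

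Next, using the 2-local property for the pair $(x,z)$ I would pick an automorphism $\Phi=\Phi_{x,z}$ with $\Phi(x)=\Delta(x)$ and $\Phi(z)=\Delta(z)=z$, and write it in the form \eqref{decompos} as $\Phi=\Phi_a\circ\overline{\psi}$. The relation $\Phi(z)=z$ reads $a\,\overline{\psi}(z)=z\,a$, which, comparing $(i,j)$-entries, yields $a_{i,j}\,\psi(z_{j,j})=z_{i,i}\,a_{i,j}$ for all $i,j$. For $i,j\neq k$ with $i\neq j$ this is $(\mu_j-\mu_i)a_{i,j}=0$, so $a_{i,j}=0$; for $i=k$, $j\neq k$ it is $(x_{k,k}+(\lambda-\mu_j)e)\,a_{k,j}=0$; and for $i\neq k$, $j=k$ it is $a_{i,k}\,\psi(x_{k,k}+(\lambda-\mu_i)e)=0$. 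The invertibility of the shifted elements, together with the fact that $\psi$ (being an automorphism) preserves invertibility, forces every off-diagonal entry of $a$ to vanish, so $a=\sum_s a_s e_{s,s}$ is diagonal.

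Finally, the $(k,k)$-entry of $\Phi(z)=z$ now reads $a_k\,\psi(x_{k,k}+\lambda e)\,a_k^{-1}=x_{k,k}+\lambda e$, and cancelling the scalar $\lambda e$ gives $a_k\,\psi(x_{k,k})\,a_k^{-1}=x_{k,k}$. Because $a$ is diagonal, the $(k,k)$-entry of $\Delta(x)=\Phi(x)=a\,\overline{\psi}(x)\,a^{-1}$ is exactly $a_k\,\psi(x_{k,k})\,a_k^{-1}$, whence $\Delta(x)_{k,k}=x_{k,k}$, as required.

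I expect the main obstacle to be the design of $z$ so that the single realizing automorphism $\Phi_{x,z}$ is simultaneously forced to have diagonal spatial part $a$ and to encode $x_{k,k}$ in its $(k,k)$-entry. The delicate point is that the shift $\lambda$ must be fixed before $\Phi$ (hence before $\psi$) is known; this is legitimate because the invertibility needed to annihilate the off-diagonal entries $a_{i,k}$ is that of $x_{k,k}+(\lambda-\mu_i)e$, which transfers to $\psi\bigl(x_{k,k}+(\lambda-\mu_i)e\bigr)$ solely because automorphisms preserve invertibility, requiring no control over $\|\psi\|$.
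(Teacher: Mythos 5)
Your proposal is correct and is essentially the paper's own argument: the paper likewise pairs $x$ with a diagonal test matrix (there $y_{k,k}=e+x_{k,k}$ and $y_{i,i}=0$ otherwise, with invertibility of $e+x_{k,k}$ secured by normalizing $\|x_{k,k}\|<1$ via homogeneity of $\Delta$), uses $\Delta(y)=y$ together with the decomposition \eqref{decompos} to annihilate the off-diagonal entries of $a$ in the $k$-th row and column, and then reads off the $(k,k)$-entry exactly as you do. Your variant with pairwise distinct scalars $\mu_i$ and a large shift $\lambda$ merely forces $a$ to be fully diagonal (slightly more than is needed) and sidesteps the homogeneity normalization; the substance is the same.
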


\begin{proof}
Let $x\in M_n(\mathcal{A}),$ and fix \(k\in \overline{1,n}.\)
Since $\Delta$ is
homogeneous, we can assume that $\|x_{k,k}\|<1,$ where $\|\cdot\|$
is the norm on $\mathcal{A}.$ Take a diagonal element $y$ in
\(M_n(\mathcal{A})\) with \(y_{k,k}=e+x_{k,k}\) and \(y_{i,i}=0\)
otherwise.  Since $\|x_{k,k}\|<1,$ it follows that $e+x_{k,k}$ is
invertible in $\mathcal{A}.$ Take an automorphism  \(\Phi\) of the form
\eqref{decompos}  such that
\begin{center}
\(\Delta(x)=a\overline{\psi}(x)a^{-1}\) and
\(\Delta(y)=a\overline{\psi}(y)a^{-1}.\)
\end{center}
Since  $y\in \mathcal{D}_n(\mathcal{A})$ we have that
$y=\Delta(y)=a\overline{\psi}(y)a^{-1},$ and therefore
\begin{eqnarray*}
0 &  = &\Delta(y)_{i,k}=a_{i,k}(e +x_{k,k}),\\
0 &  = & \Delta(y)_{k,i}=-(e +x_{k,k}) a_{k,i}
\end{eqnarray*}
for all \(i\neq k.\) Thus
$$
a_{i,k}=a_{k,i}=0
$$
for all \(i\neq k.\) The above equalities imply that
$$
\Delta(x)_{k,k}=\Delta(y)_{k,k}=x_{k,k}.
$$
The proof is complete.
\end{proof}

\begin{lemma}\label{zerro}
Let   $x$ be a matrix with $x_{k,s}=\lambda e.$  Then $\Delta(x)_{k,s}=\lambda e.$
\end{lemma}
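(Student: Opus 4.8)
The plan is to read off the $(k,s)$-entry of $\Delta(x)$ by the compression trick already used at the end of the proof of Lemma~\ref{lemmatwo}, now applied to the pair $(e_{s,k},x)$. First I would record the elementary identity that for every $y\in M_n(\mathcal{A})$ one has
\[
e_{s,k}\,y\,e_{s,k}=y_{k,s}\,e_{s,k},
\]
which follows at once from $e_{s,k}e_{i,j}e_{s,k}=\delta_{k,i}\delta_{j,s}\,e_{s,k}$ after expanding $y=\sum_{i,j}y_{i,j}e_{i,j}$. In particular, since the hypothesis is $x_{k,s}=\lambda e$, this gives $e_{s,k}\,x\,e_{s,k}=\lambda\,e_{s,k}$.

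Next I would invoke 2-locality to pick an automorphism $\Phi=\Phi_{e_{s,k},x}$ with $\Phi(e_{s,k})=\Delta(e_{s,k})$ and $\Phi(x)=\Delta(x)$. Under the standing assumption $\Delta|_{\mbox{sp}\{e_{i,j}\}_{i,j=1}^n}=\mathrm{id}$ we have $\Phi(e_{s,k})=e_{s,k}$, so by multiplicativity and homogeneity of $\Phi$,
\[
e_{s,k}\,\Delta(x)\,e_{s,k}=\Phi(e_{s,k})\,\Phi(x)\,\Phi(e_{s,k})=\Phi\bigl(e_{s,k}\,x\,e_{s,k}\bigr)=\Phi(\lambda\,e_{s,k})=\lambda\,\Phi(e_{s,k})=\lambda\,e_{s,k}.
\]
Applying the compression identity to $y=\Delta(x)$ gives $e_{s,k}\,\Delta(x)\,e_{s,k}=\Delta(x)_{k,s}\,e_{s,k}$; comparing with the display above yields $\Delta(x)_{k,s}\,e_{s,k}=\lambda\,e_{s,k}$, and hence $\Delta(x)_{k,s}=\lambda e$, as required. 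The same computation covers the case $k=s$ as well, recovering the diagonal statement of Lemma~\ref{ss}.

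I expect no serious obstacle here: unlike Lemmata~\ref{three}--\ref{six}, this argument never needs the decomposition $\Phi=\Phi_a\circ\overline{\psi}$ nor any control over the spatial part $a$ or the coefficient automorphism $\psi$. The only point requiring care is the bookkeeping that the scalar entry $\lambda e$ turns $e_{s,k}\,x\,e_{s,k}$ into a genuine complex scalar multiple of the matrix unit $e_{s,k}$, so that $\Phi$ can be pulled through it by homogeneity and the value is pinned down purely by $\Phi(e_{s,k})=e_{s,k}$.
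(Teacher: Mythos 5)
Your proof is correct and essentially identical to the paper's: both apply 2-locality to the pair $(e_{s,k},x)$, use $\Delta(e_{s,k})=e_{s,k}$ together with multiplicativity and linearity of $\Phi_{e_{s,k},x}$ to obtain $e_{s,k}\Delta(x)e_{s,k}=\Phi_{e_{s,k},x}(e_{s,k}xe_{s,k})=\lambda e_{s,k}$, and then read off the $(k,s)$-entry (the paper does this via $e_{k,k}\Delta(x)e_{s,s}=e_{k,s}e_{s,k}\Delta(x)e_{s,k}e_{k,s}=\lambda e_{k,s}$, which is the same bookkeeping as your compression identity). One small caveat about your closing aside: the case $k=s$ only recovers Lemma~\ref{ss} for scalar entries $x_{k,k}=\lambda e$, whereas that lemma treats arbitrary $x_{k,k}\in\mathcal{A}$ and genuinely needs its invertibility argument controlling the spatial part $a$.
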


\begin{proof}
We have
\begin{eqnarray*}
e_{s,k}\Delta(x)e_{s,k} & = & \Delta(e_{s,k})\Delta(x)\Delta(e_{s,k})
=\Phi_{e_{s,k}, x}(e_{s,k})\Phi_{e_{s,k}, x} (x) \Phi_{e_{s,k}, x}(e_{s,k})  =\\
& = & \Phi_{e_{s,k}, x} (e_{s,k} x e_{s,k})=\Phi_{e_{s,k}, x} (\lambda e_{s,k})=\lambda \Delta(e_{s,k})=\lambda e_{s,k}.
\end{eqnarray*}
Thus
$$
e_{k,k}\Delta(x)e_{s,s}=e_{k,s}e_{s,k}\Delta(x)e_{s,k}e_{k,s}=\lambda e_{k,s}.
$$
This means that \(\Delta(x)_{k,s}=\lambda e.\) The proof is complete.
\end{proof}

\begin{lemma}\label{cc}
Let \(k, s\) be numbers such that \(k\neq s\) and let
 $x$ be a matrix with $x_{k,s}=\lambda e,$ $\lambda \neq 0.$ Then $\Delta(x)_{s,k}=x_{s,k}.$
\end{lemma}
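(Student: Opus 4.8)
The plan is to evaluate $\Delta(x)_{s,k}$ through a \emph{single} automorphism that agrees with $\Delta$ both at $x$ and at an auxiliary diagonal matrix into which a shifted copy of $x_{s,k}$ has been built. Fix scalars $\mu,\nu$ with $|\mu|$, $|\nu|$ and $|\mu-\nu|$ all strictly larger than $\|x_{s,k}\|$, and set
\[
d=\mu e_{k,k}+(\nu e+x_{s,k})e_{s,s},
\]
a diagonal matrix. Since $\Delta$ restricts to the identity on $\mathcal{D}_n(\mathcal{A})$, we have $\Delta(d)=d$. Choose an automorphism $\Phi=\Phi_a\circ\overline{\psi}$ of the form \eqref{decompos} with $\Delta(x)=a\overline{\psi}(x)a^{-1}$ and $\Delta(d)=a\overline{\psi}(d)a^{-1}$; the latter is equivalent to the relation $a\overline{\psi}(d)=da$.

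First I would exploit this relation entrywise. Writing $\nu_s=\nu e+x_{s,k}$ and recalling $\overline{\psi}(d)=\mu e_{k,k}+\psi(\nu_s)e_{s,s}$, comparison of the $(i,j)$-entries of $a\overline{\psi}(d)$ and $da$ produces, for indices outside $\{k,s\}$, equations of the form $\mu a_{i,k}=0$, $a_{i,s}\psi(\nu_s)=0$, $\mu a_{k,j}=0$, $\nu_s a_{s,j}=0$, and, inside the block, $(\nu_s-\mu e)a_{s,k}=0$ and $a_{k,s}(\psi(\nu_s)-\mu e)=0$. The crucial point is that $\nu_s$, $\psi(\nu_s)$, $\nu_s-\mu e$ and $\psi(\nu_s)-\mu e$ are all invertible: this follows from the choice of $\mu,\nu$ together with the fact that an algebra automorphism preserves the spectrum, so that the spectral radius $r(\psi(x_{s,k}))=r(x_{s,k})\le\|x_{s,k}\|$ uniformly in the a priori unknown $\psi$. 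Hence $a$ is concentrated on rows and columns $k$ and $s$, i.e. $a_{i,k}=a_{k,i}=0$ for $i\neq k$ and $a_{i,s}=a_{s,i}=0$ for $i\neq s$; in particular $a_{k,k},a_{s,s}$ are invertible and $a^{-1}$ enjoys the same concentration. Moreover the $(s,s)$-entry of $a\overline{\psi}(d)=da$ reads $a_{s,s}\psi(\nu_s)=\nu_s a_{s,s}$, which after cancelling the scalar part collapses to the key identity
\[
a_{s,s}\,\psi(x_{s,k})\,a_{s,s}^{-1}=x_{s,k}.
\]

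With $a$ concentrated, the standard-form expression $\Delta(x)=a\overline{\psi}(x)a^{-1}$ simplifies on the two transpose positions to $\Delta(x)_{s,k}=a_{s,s}\psi(x_{s,k})a_{k,k}^{-1}$ and $\Delta(x)_{k,s}=a_{k,k}\psi(x_{k,s})a_{s,s}^{-1}=\lambda\,a_{k,k}a_{s,s}^{-1}$, the last step using $x_{k,s}=\lambda e$ and $\psi(\lambda e)=\lambda e$. Now Lemma~\ref{zerro} applied to $x$ gives $\Delta(x)_{k,s}=\lambda e$, and since $\lambda\neq0$ this forces $a_{k,k}=a_{s,s}$. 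Substituting into the formula for $\Delta(x)_{s,k}$ and invoking the key identity yields
\[
\Delta(x)_{s,k}=a_{s,s}\psi(x_{s,k})a_{s,s}^{-1}=x_{s,k},
\]
as required.

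The main obstacle, and the reason the bare matrix-unit argument of Lemma~\ref{zerro} does not suffice here, is the residual twist $c\mapsto a_{s,s}\psi(c)a_{s,s}^{-1}$: concentrating $a$ is routine, but this inner-composed automorphism of $\mathcal{A}$ cannot be removed by spectral or diagonal data alone and in general acts nontrivially on the non-scalar element $x_{s,k}$. The device that defeats it is to encode $x_{s,k}$ itself, shifted by $\nu e$ to make it invertible, into the companion diagonal matrix $d$, so that the single equation $\Delta(d)=d$ returns precisely the identity $a_{s,s}\psi(x_{s,k})a_{s,s}^{-1}=x_{s,k}$ that cancels the twist. The only technical care needed is to pick the scalars $\mu,\nu$ large enough that all the relevant invertibilities hold simultaneously for \emph{every} automorphism $\psi$ that could occur, which is exactly what preservation of the spectral radius guarantees.
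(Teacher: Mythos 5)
Your proposal is correct and takes essentially the same approach as the paper: both match $\Delta$ at $x$ and at a companion diagonal matrix encoding $x_{s,k}$, use preservation of spectra under automorphisms to bound $r(\psi(x_{s,k}))$ and kill the relevant entries of $a$, deduce $a_{k,k}=a_{s,s}$ from Lemma~\ref{zerro}, and cancel the residual inner twist via the fixed-point identity $a_{s,s}\psi(x_{s,k})a_{s,s}^{-1}=x_{s,k}$. The only differences are cosmetic --- the paper places $x_{s,k}$ unshifted in the $(k,k)$ slot and fills the other diagonal slots with distinct large scalars (forcing $a$ fully diagonal), while you shift by $\nu e$ into the $(s,s)$ slot and obtain only concentration of rows and columns $k,s$, which, as you correctly observe, suffices.
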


\begin{proof}
Take a diagonal
element \(y\) such that \(y_{k,k}=x_{s,k}\) and
\(y_{i,i}=\lambda_i e\) otherwise, where \(\lambda_i\,\, (i\neq
k)\) are distinct numbers with \(|\lambda_i|>\|x_{s,k}\|.\)  Take
an automorphism \(\Phi\) such that
\begin{center}
$\Delta(x)=\Phi(x)$ and $\Delta (y)=\Phi(y).$
\end{center}
Then $ya=a\overline{\psi}(y),$ and therefore
\begin{eqnarray*}
  0 & = & (ya-a\overline{\psi}(y))_{ij} = \lambda_j a_{i,j}-\lambda_i a
_{i,j}=a_{i,j}(\lambda_j -\lambda_i)\,\,\, \textrm{for}\,\,\, (i-j)(i-k)(j-k)\neq 0,\\
 0 & = & (ya-a\overline{\psi}(y))_{i,k} = a_{i,k}\overline{\psi}(y_{k,k})-\lambda_i a_{i,k}
=a_{i,k}(\overline{\psi}(x_{s, k})-\lambda_i)\,\,\, \textrm{for}\,\,\,  i\neq k,\\
0 & = &  (ya-a\overline{\psi}(y))_{k,j} = a_{k,j}\lambda_{j}-\overline{\psi}(y_{kk}) a_{k j}
=(\lambda_j-\overline{\psi}(x_{s, k}))a_{k,j} \,\,\, \textrm{for}\,\,\,  j\neq k.
\end{eqnarray*}
Thus
 \(a_{i,j}=0\) for all \(i\neq j,\) i.e.  \(a\) is a diagonal
element. Since
\[
\lambda e=\Delta(x)_{ks}=a_{kk} \lambda  ea_{ss}^{-1},
\]
it follows that \(a_{k,k}=a_{s,s}.\) Finally,
\begin{eqnarray*}
\Delta(x)_{s,k} & = &
a_{s,s}\overline{\psi}(x_{s,k})a_{k,k}^{-1}=\\
& = &
a_{k,k}\overline{\psi}(y_{k,k})a_{k,k}^{-1}=\Delta(y)_{k,k}=x_{s,k}.
\end{eqnarray*}
The proof is complete. \end{proof}

In the next two Lemmata  we assume that  $\Delta$ is a
2-local automorphism  on $M_{2}(\mathcal{A}).$

\begin{lemma}\label{z}
 Let
$x=\left(
     \begin{array}{cc}
       x_{1,1} & \lambda e \\
       x_{2,1} & x_{2,2} \\
     \end{array}
   \right)
$ and $y=\left(
     \begin{array}{cc}
       x_{1,1} & x_{1,2} \\
       x_{2,1} & x_{2,2} \\
     \end{array}
   \right),
$ where $|\lambda|> ||\Delta(y)_{1,2}||.$
Then
$\Delta(x)_{2,1}=\Delta(y)_{2,1}.$
\end{lemma}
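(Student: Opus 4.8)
The plan is to invoke the $2$-local property for the pair $(x,y)$ at once and then exploit that $x$ and $y$ differ only in their $(1,2)$-entry. Pick an automorphism $\Phi=\Phi_a\circ\overline{\psi}$ of the form \eqref{decompos} with $\Delta(x)=a\overline{\psi}(x)a^{-1}$ and $\Delta(y)=a\overline{\psi}(y)a^{-1}$, and abbreviate $b=a^{-1}$. Since $\psi$ is unital we have $\psi(\lambda e)=\lambda e$, whence $\overline{\psi}(x)-\overline{\psi}(y)=c\,e_{1,2}$ with $c=\lambda e-\psi(x_{1,2})$. A one-line entrywise computation then yields
$$
(\Delta(x)-\Delta(y))_{i,j}=a_{i,1}\,c\,b_{2,j},\qquad i,j\in\{1,2\}.
$$

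Next I would read off the constraints supplied by the previous lemmata. Because $x_{k,k}=y_{k,k}$, Lemma \ref{ss} forces the diagonal of the difference to vanish, giving $a_{1,1}\,c\,b_{2,1}=0$ and $a_{2,1}\,c\,b_{2,2}=0$. By Lemma \ref{zerro} we have $\Delta(x)_{1,2}=\lambda e$, so the $(1,2)$-entry of the difference equals $a_{1,1}\,c\,b_{2,2}=\lambda e-\Delta(y)_{1,2}$. The hypothesis $|\lambda|>\|\Delta(y)_{1,2}\|$ (in particular $\lambda\neq 0$) makes $\lambda e-\Delta(y)_{1,2}=\lambda(e-\lambda^{-1}\Delta(y)_{1,2})$ invertible by a Neumann series; call its inverse $d$. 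Hence $a_{1,1}(c\,b_{2,2}d)=e$, so $a_{1,1}$ admits the right inverse $r:=c\,b_{2,2}d$.

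It remains to show that the last entry $a_{2,1}\,c\,b_{2,1}$ of the difference vanishes, as this is precisely $\Delta(x)_{2,1}-\Delta(y)_{2,1}$. For this I would use the identity $a^{-1}a=e$, whose $(2,1)$-entry gives $b_{2,1}a_{1,1}+b_{2,2}a_{2,1}=0$, i.e.\ $b_{2,1}a_{1,1}=-b_{2,2}a_{2,1}$. Multiplying the target expression on the right by $a_{1,1}$ and inserting this relation, then using $a_{2,1}\,c\,b_{2,2}=0$, gives
$$
a_{2,1}\,c\,b_{2,1}a_{1,1}=-a_{2,1}\,c\,b_{2,2}a_{2,1}=0.
$$
Multiplying on the right by $r$ and using $a_{1,1}r=e$ collapses this to $a_{2,1}\,c\,b_{2,1}=0$, as desired.

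The one delicate point is that over a general Banach algebra $\mathcal{A}$ invertibility of the product $a_{1,1}\,c\,b_{2,2}$ does \emph{not} make the individual factor $a_{1,1}$ two-sided invertible. It is therefore essential that we extract only a right inverse of $a_{1,1}$ and arrange the final cancellation so that $a_{1,1}$ is first introduced on the left of $b_{2,1}$ (via the relation from $a^{-1}a=e$) before being cancelled on the right by $r$. I expect this one-sided bookkeeping to be the only subtlety; the rest is a routine $2\times 2$ matrix manipulation.
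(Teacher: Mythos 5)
Your proof is correct. It opens with the same moves as the paper's: apply $2$-locality to the pair $(x,y)$, write $\Phi=\Phi_a\circ\overline{\psi}$ as in \eqref{decompos}, use $x-y=(\lambda e-x_{1,2})e_{1,2}$, and pin down the diagonal and the $(1,2)$-entry of $\Delta(x)-\Delta(y)$ via Lemmata~\ref{ss} and~\ref{zerro} together with invertibility of $\lambda e-\Delta(y)_{1,2}$. The endgame, however, is genuinely different, and in a way that matters. The paper multiplies the conjugacy relation through by $a$, obtaining $(\lambda e-\Delta(y)_{1,2})a_{2,1}=0$, hence $a_{2,1}=0$, and then asserts ``thus $a_{1,1}$ is invertible'' in order to cancel it in $(\Delta(x)_{2,1}-\Delta(y)_{2,1})a_{1,1}=0$. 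That assertion is precisely the delicate point you flagged: over a general unital Banach algebra, an invertible upper-triangular $2\times 2$ matrix need not have invertible diagonal entries --- for the unilateral shift $S$ on a Hilbert space, $\bigl(\begin{smallmatrix} S & e-SS^{*} \\ 0 & S^{*}\end{smallmatrix}\bigr)$ is unitary while $S$ is only left-invertible --- and a left inverse of $a_{1,1}$ does not permit cancellation on the right. Your construction of an explicit right inverse $r=c\,b_{2,2}\,d$ of $a_{1,1}$ from the $(1,2)$-entry, combined with the $(2,1)$-entry of $a^{-1}a=e$ and the relation $a_{2,1}cb_{2,2}=0$, circumvents this cleanly; I checked the associativity bookkeeping in your final two displays and it is sound. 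So your write-up is in fact more careful than the paper's own proof at exactly this step. For the record, the paper's route is also repairable without your $r$: writing $w=\Delta(x)_{2,1}-\Delta(y)_{2,1}$, the $(2,2)$-entry of the same relation gives $wa_{1,2}=a_{2,1}c=0$ once $a_{2,1}=0$, so $(we_{2,1})a=0$ and invertibility of $a$ yields $w=0$; note that even in the intended application, where $\mathcal{A}$ is a corner of a properly infinite $AW^{*}$-algebra, one-sided invertibles abound, so your one-sided bookkeeping (or this repair) is genuinely needed rather than pedantic.
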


\begin{proof}
Take an automorphism  \(\Phi\)  such that
\begin{center}
$\Delta(x)=\Phi(x)$ and $\Delta(y)=\Phi(y).$
\end{center}
Then
\begin{eqnarray*}
\left(
     \begin{array}{cc}
       0 & \lambda e-\Delta(y)_{1,2} \\
       \left(\Delta(x)-\Delta(y)\right)_{2,1} & 0 \\
     \end{array}
   \right)   \left(
     \begin{array}{cc}
       a_{1,1} & a_{1,2} \\
       a_{2,1} & a_{2,2} \\
     \end{array}
   \right) = \left(
     \begin{array}{cc}
       a_{1,1} & a_{1,2} \\
       a_{2,1} & a_{2,2} \\
     \end{array}
   \right)\left(
     \begin{array}{cc}
       0 & \lambda e-x_{1,2} \\
       0 & 0 \\
     \end{array}
   \right).
\end{eqnarray*}
Thus
$$
\left\{
  \begin{array}{ll}
    (\lambda  e-\Delta(y)_{1,2})a_{2,1}=0, & \hbox{} \\
    (\Delta(x)_{2,1}-\Delta(y)_{2,1})a_{1,1}=0. & \hbox{}
  \end{array}
\right.
$$
Since $|\lambda|> ||\Delta(y)_{1,2}||,$ it follows that $\lambda e-\Delta(y)_{1,2}$ is invertible in $\mathcal{A},$  and therefore
the first equality implies that
$a_{2,1}=0.$ Thus $a_{1,1}$ is invertible and the second equality gives us $\Delta(x)_{2,1}=
\Delta(y)_{2,1}.$
The proof is complete.
\end{proof}

\begin{lemma}\label{twotwo}
$\Delta=id.$
\end{lemma}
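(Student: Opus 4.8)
The plan is to prove that $\Delta$ fixes every matrix $y\in M_2(\mathcal{A})$ entrywise, treating the diagonal by a result already in hand and the two off-diagonal entries by a chain of the previous lemmata together with a symmetry argument. The diagonal entries are immediate: Lemma~\ref{ss} gives $\Delta(y)_{1,1}=y_{1,1}$ and $\Delta(y)_{2,2}=y_{2,2}$, so only the $(2,1)$- and $(1,2)$-entries need attention.

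For the $(2,1)$-entry I would combine Lemmas~\ref{z} and~\ref{cc}. Given an arbitrary $y$, choose a scalar $\lambda\neq 0$ with $|\lambda|>\|\Delta(y)_{1,2}\|$ (such a $\lambda$ always exists) and put $x=\left(\begin{smallmatrix} y_{1,1} & \lambda e \\ y_{2,1} & y_{2,2}\end{smallmatrix}\right)$, which coincides with $y$ in all entries except the $(1,2)$-one. Lemma~\ref{z} then yields $\Delta(x)_{2,1}=\Delta(y)_{2,1}$, while Lemma~\ref{cc}, applied with $k=1$, $s=2$ (legitimate since $x_{1,2}=\lambda e$ with $\lambda\neq 0$), gives $\Delta(x)_{2,1}=x_{2,1}=y_{2,1}$. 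Hence $\Delta(y)_{2,1}=y_{2,1}$ for every $y\in M_2(\mathcal{A})$.

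For the $(1,2)$-entry I would invoke the symmetry of $M_2(\mathcal{A})$ under interchange of the two indices. Let $w=e_{1,2}+e_{2,1}$; since $w=w^{-1}$, the spatial automorphism $\theta=\Phi_w$ satisfies $\theta^2=\mathrm{id}$ and swaps the indices $1$ and $2$. The map $\theta\circ\Delta\circ\theta$ is again a $2$-local automorphism of $M_2(\mathcal{A})$, and it again satisfies the two standing normalizations: both $\mathcal{D}_2(\mathcal{A})$ and $\mathrm{sp}\{e_{i,j}\}$ are $\theta$-invariant, and on each of them $\theta\circ\Delta\circ\theta$ reduces to $\theta\circ\theta=\mathrm{id}$ because $\Delta$ is the identity there. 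Thus the $(2,1)$-result of the previous paragraph applies to $\theta\circ\Delta\circ\theta$, and unwinding $\theta$ (which carries the $(2,1)$-entry to the $(1,2)$-entry) converts this into $\Delta(y)_{1,2}=y_{1,2}$. Equivalently, one may simply repeat the proofs of Lemmas~\ref{z} and~\ref{cc} verbatim with the roles of $1$ and $2$ interchanged. Combined with the two previous paragraphs this gives $\Delta(y)=y$ for all $y\in M_2(\mathcal{A})$, i.e. $\Delta=\mathrm{id}$.

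The main obstacle is exactly the $(1,2)$-entry: Lemma~\ref{z} is stated only in the orientation that recovers the $(2,1)$-entry from a prescribed $(1,2)$-entry, so one must either verify cleanly that the flipped map $\theta\circ\Delta\circ\theta$ inherits every hypothesis imposed on this family of lemmata, or re-derive the mirror image of Lemma~\ref{z}. Everything else is routine bookkeeping of matrix entries using results already established.
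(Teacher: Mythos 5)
Your proposal is correct and is essentially the paper's own proof: diagonal entries via Lemma~\ref{ss}, and each off-diagonal entry by replacing it with a large scalar $\lambda e$ and combining Lemma~\ref{cc} (which pins down the opposite entry of the modified matrix) with Lemma~\ref{z} (which shows the replacement does not disturb that entry of the image). The only difference is cosmetic: the paper treats both off-diagonal positions at once by writing generic indices $k\neq s$, thereby silently using the mirrored version of Lemma~\ref{z}, whereas you justify that mirror explicitly via conjugation by the flip automorphism $\Phi_w$ with $w=e_{1,2}+e_{2,1}$ --- a legitimate, and in fact slightly more careful, rendering of the same argument.
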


\begin{proof}
Let $x\in M_2(\mathcal{A}).$ By Lemma~\ref{ss} we have that $\Delta(x)_{k,k}=x_{k,k}$
for $k=1,2.$

Let now $k\neq s.$ Take a matrix  \(y\) with \(y_{s,k}=\lambda e\) and
\(y_{i,j}=x_{i,j}\) otherwise. By Lemma~\ref{cc} we have that
\(\Delta(y)_{k,s}=x_{k,s}.\) Further Lemma~\ref{z} implies that
$$
\Delta(x)_{k,s}=\Delta(y)_{k,s}=x_{k,s}.
$$
Thus $\Delta(x)_{k,s}=\Delta(y)_{k,s}=x_{k,s}$ for all $k,s=1,2,$ and therefore $\Delta(x)=x.$
The proof is complete.
\end{proof}

Now we are in position to prove
Theorem~\ref{mainlocal}.

\textit{Proof of Theorem~\ref{mainlocal}}.   Let $\Delta$ be a
2-local automorphism  on $M_{2^n}(\mathcal{A}),$ where $n\geq 2.$ By
Lemma~\ref{lemmatwo}   there exists an automorphism  $\Phi_1$ on
$M_{2^n}(\mathcal{A})$ such that
$\Delta|_{\mbox{sp}\{e_{i,j}\}_{i,j=1}^{2^n}}=\Phi_1|_{\mbox{sp}\{e_{i,j}\}_{i,j=1}^{2^n}}.$
 Replacing, if necessary, $\Delta$ by $\Phi_1^{-1}\circ\Delta,$ we may assume that
$\Delta$ is identical on $\mbox{sp}\{e_{i,j}\}_{i,j=1}^{2^n}.$
Further, by Lemma~\ref{adj}  there exists an automorphism $\Phi_2$ on $M_{2^n}(\mathcal{A})$ such that
$\Delta|_{\mathcal{D}_{2^n}}=\Phi_2|_{\mathcal{D}_{2^n}}.$
Now replacing $\Delta$ by $\Phi_2^{-1}\circ \Delta,$ we can
assume that $\Delta$ acts as the identity on  $\mathcal{D}_{2^n}.$ So, we
can assume that
\begin{center}
$\Delta|_{\mbox{sp}\{e_{i,j}\}_{i,j=1}^{2^n}}\equiv id|_{\mbox{sp}\{e_{i,j}\}_{i,j=1}^{2^n}}$ and
$\Delta|_{\mathcal{D}_{2^n}}\equiv id|_{\mathcal{D}_{2^n}}.$
\end{center}

Let us to show that $\Delta\equiv id.$ We proceed by induction on
$n.$

Let $n=2.$ We identify  the algebra $M_4(\mathcal{A})$ with the
algebra of $2\times 2$-matrices   $M_2(\mathcal{B}),$ over
$\mathcal{B}=M_2(\mathcal{A}).$

Let $\{e_{i,j}\}_{i,j=1}^4$ be a system of matrix units in
$M_4(\mathcal{A}).$ Then
$$
p_{1,1}=e_{1,1}+e_{2,2},\, p_{2,2}=e_{3,3}+e_{4,4},\,
p_{1,2}=e_{1,3}+e_{2,4},\, p_{2,1}=e_{3,1}+e_{4,2}
$$
is the system of matrix units in $M_2(\mathcal{B}).$ Since
$\Delta|_{\mbox{sp}\{e_{i,j}\}_{i,j=1}^{4}}\equiv id|_{\mbox{sp}\{e_{i,j}\}_{i,j=1}^{4}},$ it follows
that $\Delta|_{\mbox{sp}\{p_{i,j}\}_{i,j=1}^{2}}\equiv id|_{\mbox{sp}\{p_{i,j}\}_{i,j=1}^{2}}.$

 Take an arbitrary element $x\in
p_{1,1}M_2(\mathcal{B})p_{1,1}\equiv \mathcal{B}.$ Choose an automorphism $\Phi$ on $M_2(\mathcal{B})$ such that
$$
\Delta(x)=\Phi(x),\, \Delta(p_{1,1})=\Phi(p_{1,1}).
$$
Since  $\Delta(p_{1,1})=p_{1,1},$ we obtain that
$$
p_{1,1}\Delta(x)p_{1,1}=p_{1,1}\Phi(x)p_{1,1}=\Delta(x).
$$
This means that the restriction $\Delta_{1,1}$ of $\Delta$ onto
$p_{1,1}M_2(\mathcal{B})p_{1,1}\equiv \mathcal{B}$ maps
$\mathcal{B}=M_2(\mathcal{A})$ into itself.

If $\mathcal{D}_4$ is the subalgebra of diagonal matrices from $M_4(\mathcal{A}),$ then
$p_{1,1}\mathcal{D}_4p_{1,1}$ is the  subalgebra  of diagonal matrices in the algebra
$M_2(\mathcal{A}).$ Since $\Delta|_{\mathcal{D}_{4}}\equiv id|_{\mathcal{D}_{4}},$ it
follows that $\Delta_{1,1}$ acts identically on  diagonal matrices
from  $M_{2}(\mathcal{A}).$ So,
\begin{center}
$\Delta_{1,1}|_{\mbox{sp}\{e_{i,j}\}_{i,j=1}^{2}}\equiv id|_{\mbox{sp}\{e_{i,j}\}_{i,j=1}^{2}}$ and
$\Delta_{1,1}|_{p_{1,1}\mathcal{D}_4p_{1,1}}\equiv id|_{p_{1,1}\mathcal{D}_4p_{1,1}}.$
\end{center}
By Lemma~\ref{twotwo} it follows that $\Delta_{1,1}\equiv id.$

Let $\mathcal{D}_2$ be the set of diagonal matrices from  $M_2(\mathcal{B}).$ Since
$$
\mathcal{D}_2=\left( \begin{array}{cc}
\mathcal{B} & 0 \\
0 & \mathcal{B}
\end{array} \right)
$$
and $\Delta_{1,1}=id,$ Lemma~\ref{lemmafour} implies  that
$\Delta|_{\mathcal{D}_2}\equiv id|_{\mathcal{D}_2}.$ Hence, $\Delta$ is a 2-local
derivation on $M_2(\mathcal{B})$ such that
\begin{center}
$\Delta|_{\mbox{sp}\{p_{i,j}\}_{i,j=1}^{2}}\equiv id|_{\mbox{sp}\{p_{i,j}\}_{i,j=1}^{2}}$ and
$\Delta|_{\mathcal{D}_2}\equiv id|_{\mathcal{D}_2}.$
\end{center}
Again by Lemma~\ref{twotwo} it
follows that $\Delta\equiv id.$

Now assume that the assertion of the Theorem is true for $n-1.$

Considering the algebra   $M_{2^n}(\mathcal{A})$ as the algebra of
$2\times 2$-matrices  $M_2(\mathcal{B})$ over
$\mathcal{B}=M_{2^{n-1}}(\mathcal{A})$ and repeating the above
arguments we obtain that  $\Delta\equiv id.$ The proof is complete.
$\Box$

Now we apply Theorem~\ref{mainlocal} to the proof of our main result which
describes  2-local automorphism  on  $AW^\ast$-algebras.

First note that by  \cite[Theorem 3.3]{Stormer} (see also \cite[Theorem  3.2.3]{BR}) any $C^\ast$-algebra, in particular, $AW^\ast$-algebra,  has the
property \textbf{(J)}.

\textit{Proof of Theorem~\ref{kaplanal}}.
Let   $M$ be  an arbitrary $AW^\ast$-algebra without finite type I
direct summands. Then there exist mutually orthogonal central projections $z_1, z_2, z_3$  in $M$ such that
$M=z_1M \oplus z_2 M \oplus z_3 M,$ where $z_1 M, z_2 M, z_3 M$ are algebras of types I$_\infty,$
II and III, respectively. Then  the halving Lemma~\cite[P. 120, Theorem 1]{Berber}
applied to each summand implies that the unit $z_i$ of the algebra $z_i M, $
($i=1,2,3$)  can be
represented as a sum of mutually equivalent orthogonal projections
$e_1^{(i)}, e_2^{(i)}, e_3^{(i)},e_4^{(i)}$ from $z_iM.$ Set $e_k=\sum\limits_{i=1}^3 e_k^{(i)},$ $k=1,2,3,4.$
Then the map $x\mapsto
\sum\limits_{i,j=1}^4 e_ixe_j$ defines an  isomorphism between the
algebra $M$ and the matrix algebra $M_4(\mathcal{A}),$ where
$\mathcal{A}=e_{1,1}Me_{1,1}.$ Moreover, the algebra \(\mathcal{A}\) has the properties
\textbf{(J)} and \textbf{(M)} (see the Remark~\ref{remark} after the definition of property \textbf{(M)}).
Therefore Theorem~\ref{mainlocal} implies that any
2-local automorphism  on   $M$  is an  automorphism. The
proof is complete.
$\Box.$

\section*{Acknowledgments} The authors are indebted to the reviewer for useful remarks.


\begin{thebibliography}{22}






\bibitem{AK2016JP} Sh.~A.~Ayupov and K.~K.~Kudaybergenov, \textit{2-Local derivations on
matrix algebras over semi-prime Banach algebras and on
\(AW^\ast\)-algebras,} Journal of Physics: Conference Series,
\textbf{697} (2016) 1--10.


\bibitem{AK2016} Sh. A. Ayupov, K. K. Kudaybergenov, \textit{Derivations, local
and 2-local derivations on algebras of measurable operators,} in
Topics in Functional Analysis and Algebra, \emph{Contemporary
Mathematics}, vol. 672, Amer. Math. Soc., Providence, RI, 2016,
pp. 51-72.



\bibitem{AKP}
Sh. A. Ayupov,  K. K. Kudaybergenov and  A. M. Peralta,\textit{ A
survey on local and 2-local derivations on C$^\ast$- and von
Neumann algebras,} in
Topics in Functional Analysis and Algebra, \emph{Contemporary
Mathematics}, vol. 672, Amer. Math. Soc., Providence, RI, 2016,
pp. 73-126.


\bibitem{BO}
G.M.Benkart, J.M. Osborn,
\textit{Derivations and automorphisms of nonassociative matrix algebras,}
Trans. AMS.  \textbf{263}  2 (1981) 411--430.










\bibitem{Berber}  S. Berberian, Bear *-rings, Springer 1972, 2nd edition 2011.



\bibitem{BFGP} M.J. Burgos, F.J. Fernandez Polo, J.J. Garces, A.M. Peralta,
\textit{A Kowalski-Slodkowski theorem for 2-local $\ast$-homomorphisms
on von Neumann algebras}, Revista Serie A Matematicas \textbf{109}, Issue 2 (2015), Page 551-568.


\bibitem{BR} O. Brattelli, D. Robinson, \textit{Operator algebras and quantum statistical mechanics,}
2nd Edition Springer-Verlag Berlin Heidelberg New York 2002.















\bibitem{Joh}  {B.~E.~Johnson,} \textit{Local derivations on $C^{\ast}$-algebras
are derivations,} Trans. Amer. Math. Soc., \textbf{353} (200)
313--325.




\bibitem{Kad}{R.~V.~Kadison,} \textit{Local derivations,}  J.~Algebra, \textbf{130}
(1990)  494--509.



\bibitem{KimKim04} S.O. Kim, J.S. Kim,
\textit{Local automorphisms and derivations on $\mathbb{M}_n$,}
Proc. Amer. Math. Soc. \textbf{132}, no. 5, 1389-1392~(2004).




\bibitem{Lar}{D.~R.~Larson and  A.~R.~Sourour,} \textit{Local derivations and local automorphisms of
$B(X)$,} Operator theory: operator algebras and applications, part
2 (Durham,NH, 1988), 187--194, Proc. ~Sympos.~Pure~Math. ~51, Part
2, Amer.Math.Soc.,~Providence,~RI,~(1990).




\bibitem{Semrl97}  P. \v{S}emrl, \textit{Local automorphisms and derivations
on $B(H)$,} Proc. Amer. Math. Soc. \textbf{125}, 2677-2680 (1997).



\bibitem{Stormer} E. Stormer, \textit{On the Jordan structure of $C^\ast$-algebras},
Trans. Amer. Math. Soc. \textbf{120}  (1965), 438-447.


\end{thebibliography}
\end{document}